\documentclass[10pt]{amsart}
\usepackage{mathrsfs} 
\usepackage[all]{xy} 

\usepackage{graphicx} 

\usepackage{tikz}
\usetikzlibrary{arrows}
\usetikzlibrary{shapes,snakes}

\newtheorem{thm}{Theorem}[section]
\newtheorem{prop}[thm]{Proposition}
\newtheorem{conj}[thm]{Conjecture}

\newtheorem{lem}[thm]{Lemma}

\theoremstyle{definition}

\numberwithin{equation}{section}
\newtheorem{rem}[thm]{Remark}
\newtheorem{rems}[thm]{Remarks}
\newtheorem{ex}[thm]{Example}

\def\one{\mathbf 1}

\newcommand{\sgn}{\mbox{sign}}

\begin{document}
\bibliographystyle{amsalpha}

\title[Asymptotic Sign Coherence]
{
Asymptotic Sign Coherence Conjecture
}

\author{Michael Gekhtman}
\address{\noindent 
Department of Mathematics, University of Notre Dame, Notre Dame, IN 46556,
USA}
\email{mgekhtma@nd.edu}
\author{Tomoki Nakanishi}
\address{\noindent Graduate School of Mathematics, Nagoya University, 
Chikusa-ku, Nagoya,
464-8604, Japan}
\email{nakanisi@math.nagoya-u.ac.jp}


\date{}
\begin{abstract}
The sign coherence phenomenon is an important feature of $c$-vectors in cluster algebras with principal coefficients.
In this note, we consider a more general version of $c$-vectors defined for arbitrary cluster algebras of geometric type and formulate a conjecture describing their asymptotic behavior. This conjecture, which is called {\em the asymptotic sign coherence conjecture}, states that for any infinite sequence of matrix mutations that satisfies certain natural conditions, the corresponding $c$-vectors eventually become sign coherent. We prove this conjecture for rank $2$ cluster algebras of infinite type and for a particular sequence of mutations in a cluster algebra associated with the Markov quiver. 

\end{abstract}

\maketitle

\section{Introduction}


A study of $c$-vectors for cluster algebras with principal coefficients was initiated in \cite{CAIV}, the fourth in the series of foundational papers that gave rise to the theory of cluster algebras. There, $c$-vectors appeared, together with related concepts such as $g$-vectors and $F$-polynomials, as tools for understanding a deeper structure of cluster variables. Also in \cite{CAIV}, a conjecture that later became known as {\em the sign coherence conjecture} was first formulated. It states that in any cluster mutation equivalent to the initial one, each $c$-vector is nonzero and has either all non-negative or all non-positive coefficients. It was soon realized that this conjecture has important implications in various aspects of the theory of cluster algebras, notably, in establishing duality properties as was done in \cite{NZ}. The conjecture was  proved in \cite{DWZ} for the skew-symmetric cluster algebras 
and in \cite{GHKK} for the skew-symmetrizable ones. 

It is natural to wonder if there is an analogue of sign coherence that remains valid if the condition on coefficients being principal is relaxed, in particular, if a similar phenomenon occurs in  cluster algebras of {\em geometric type}  in the sense of \cite[Def.2.12]{CAIV},  where their coefficients take value in tropical semifields.
Clearly, this property cannot be satisfied as is --- one can simply start with the initial coefficients for which it is not valid. This note proposes how sign coherence can be treated in arbitrary cluster algebras of geometric type: after  a sufficiently generic sequence of mutation, 
$c$-vectors, defined in a more general context, become sign coherent.

In the next section, after providing a background on matrix mutations and providing an example illustrating the phenomenon described above, we formulate a conjecture that describes this behavior. We call it
{\em the asymptotic sign coherence conjecture}. 
In section 3, we verify our conjecture in the rank $2$ case. The final section deals with a particular sequences of mutations in a cluster algebra associated with the Markov quiver - we show that the conjecture holds true in this case as well.


{\em Acknowledgements.} 
This work is supported in part by JSPS Grant No. 16H03922 to T. N. and by M.G.'s NSF grant No. 1702054.

\section{Preliminaries and the main conjecture}

\subsection{Matrix mutations.
}
\label{subsec:mut1}


We say that an $N\times N$ integer matrix $B=(b_{ij})_{i,j=1}^N$
is
{\em skew-symmetrizable}
if there is  a diagonal matrix $D=\mathrm{diag}(d_1,\dots,d_N)$
with positive integer diagonal
entries $d_1,\dots,d_N$ such that $DB$ is skew-symmetric,
i.e., $d_ib_{ij}= - d_j b_{ji}$.
We call such $D$ a {\em skew-symmetrizer} of $B$. $B$ is called {\em irreducible} if there does not exist  a pair $I,J$ of nonempty subsets of $[N]:=\{1,\ldots, N\} $ such that
$b_{ij}=0$ for any $i\in I$ and $j\in J$. If $B$ is skew-symmetric, it can be realized as an adjacency matrix of a quiver with $N$ vertices. The irreducibility property in this case is equivalent to connectedness of the quiver.

The notion of a {\em matrix mutation} is one of the key ingredients in the definition of a cluster algebra. It is also the only ingredient of that definition that we will need in this note. We will be interested in  matrix mutations of integer matrices with skew-symmetrizable principal parts. Namely, let $\hat B$ be an integer $(N+M)\times N$ matrix such that its principal submatrix $B:=\hat B_{[N]}$ formed by the first $N$ rows is
skew-symmetrizable. Columns of $\hat B$ determine the rules of transformations of cluster variables $x_1,\ldots, x_N$ with the variables $x_{N+1},\ldots, x_{N+M}$ being frozen. Alternatively, we may view the columns of the bottom  $M\times N$ submatrix of $\hat B$ as encoding expressions for coefficients in a cluster algebra of geometric type. See \cite[Section 2]{CAIV} for the explanation.



For each $k=1,\dots,N$, the {\em mutation of  $\hat B$ at $k$}
is another integer $(N+M)\times N$ matrix $\hat B'=\mu_k(\hat B)$,
which is obtained from $\hat B$ by the following formulas:

\begin{align}
\label{eq:Bmut1}
b'_{ij}
&=
\begin{cases}
-b_{ij}
& \text{$i=k$ or $j=k$}\\
b_{ij}+[-\varepsilon b_{ik}]_+ b_{kj}
+ b_{ik} [\varepsilon b_{kj}]_+
&
i,j\neq k,
\end{cases}
\end{align}
where, for $a\in\mathbb{R}$, we denote $[a]_+=\max(a,0)$, and $\varepsilon$ is a sign, $+$ or $-$, which is naturally identified with $1$ or $-1$,
respectively.
Then we have the following properties:
\par
(1). The right hand side of \eqref{eq:Bmut1}
is  independent of
the choice of sign $\varepsilon$.
\par
(2).
If $D$ is a skew-symmetrizer of $B$,
then it is also a skew-symmetrizer of  $B'=\hat B'_{[N]}$.
\par
(3).
The mutation $\mu_k$ is involutive, namely,
\begin{align}
\nonumber
\mu_k \circ \mu_k=\mathrm{id}.
\end{align}
More generally, if $\hat B'$ is obtained from $\hat B$ by a sequence of mutations \eqref{eq:Bmut1}, then $\hat B'$ is said to be {\em mutation equivalent} to $\hat B$.

If $M=N$ and the block formed by the last $N$ rows of $\hat B$ is equal to the identity matrix $\one_N$ then the cluster algebra associated with $\hat B$ is said to have {\em principal coefficients}. In this case, the bottom $N\times N$ submatrix of any matrix $\hat B'$ mutation equivalent to $\hat B$ is called a $C$-matrix and its columns are called $c$-vectors.

\subsection{Main conjecture}
\label{subsec:conj}

The following key property of $c$-vectors that proved to be of fundamental importance in the theory and applications of cluster algebras was conjectured in \cite[Conjecture 5.5 $\&$ Proposition 5.6]{CAIV} and later proved in  \cite[Theorem 1.7]{DWZ}  in the skew-symmetric case  and \cite[Corollary
5.5]{GHKK} in the skew-symmetrizable case with both proofs using \cite[Proposition 5.6]{CAIV}.

\begin{thm}[{\bf Sign-coherence of c-vectors}]
\label{sign_coh}
Each $c$-vector  is a nonzero vector,
and its components are either all non-negative or all non-positive.
\end{thm}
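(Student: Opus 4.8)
The plan is to follow the scattering-diagram approach of \cite{GHKK}, which proves the statement uniformly for every skew-symmetrizable $B$; I describe the skew-symmetric case, the general one requiring only that one fix a skew-symmetrizer $D$ of $B$ and work throughout with the lattice $\Lambda=\mathbb{Z}^N$ carrying the bilinear form $\{e_i,e_j\}=d_ib_{ij}$ and its dual $\Lambda^{*}=\mathrm{Hom}(\Lambda,\mathbb{Z})$. First I would set up the cluster scattering diagram $\mathcal D_B$: it lives in $\Lambda^{*}_{\mathbb R}$, over a completed monoid algebra over a ground field, and is a collection of \emph{walls} --- rational codimension-one cones $w\subset\Lambda^{*}_{\mathbb R}$, each decorated with a wall-crossing automorphism determined by a function $f_w$ that is a power series, with constant term $1$, in the monomial $z^{n_0}$ attached to the primitive integral vector $n_0\in\Lambda$ normal to $w$ --- subject to the \emph{consistency} condition that every path-ordered product of wall-crossing automorphisms taken around the origin be trivial. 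One starts with the incoming walls normal to $e_1,\dots,e_N$ carrying the functions $1+z^{e_i}$ and invokes the fundamental existence-and-uniqueness theorem: up to equivalence there is a unique consistent $\mathcal D_B$ containing them. Because $\mathcal D_B$ is built order by order, and the scattering of two walls with normal directions $n,n'$ produces only walls whose normal direction is a non-negative integer combination of $n$ and $n'$, a straightforward induction from the incoming walls shows that the normal direction of \emph{every} wall of $\mathcal D_B$ lies in the positive cone $\mathbb{Z}^N_{\ge 0}\subset\Lambda$; and it is \emph{nonzero}, since no wall function is identically $1$.

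Second I would extract the $c$-vectors from $\mathcal D_B$. One identifies inside $\mathcal D_B$ the \emph{cluster complex}: a positive chamber $\mathcal C^{+}=\{m\in\Lambda^{*}_{\mathbb R}:\langle m,e_i\rangle\ge 0\ \text{for all }i\}$ and, for every vertex $\mathbf t$ reachable from the initial vertex $\mathbf t_0$ by a finite sequence of the mutations \eqref{eq:Bmut1}, a full-dimensional chamber $\mathcal C_{\mathbf t}$ (the $\mathbf g$-vector cone), with $\mathcal C_{\mathbf t_0}=\mathcal C^{+}$ and with mutation at $k$ corresponding to crossing the $k$-th facet of $\mathcal C_{\mathbf t}$; that facet lies on a wall $w_{\mathbf t,k}$ of $\mathcal D_B$. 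The dictionary to be established --- by matching the recursion \eqref{eq:Bmut1} for the bottom block of $\hat B$ against the recursion that governs wall-crossings under mutation --- is that the primitive normal $n_{\mathbf t,k}\in\Lambda$ of $w_{\mathbf t,k}$ satisfies $c^{\mathbf t}_k=\pm\,n_{\mathbf t,k}$, the sign recording from which side the wall is crossed (the attached function being $1+z^{n_{\mathbf t,k}}+\cdots$ on one side and its inverse on the other). Granting this, the proof concludes at once: every $c$-vector, being the $k$-th column of the $C$-matrix at some $\mathbf t$, equals $\pm\,n_{\mathbf t,k}$ with $n_{\mathbf t,k}\in\mathbb{Z}^N_{\ge 0}\setminus\{0\}$, hence is nonzero and has components either all $\ge 0$ or all $\le 0$.

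The hard part is everything underlying these two steps: the existence, uniqueness, and --- above all --- positivity of the consistent scattering diagram $\mathcal D_B$, via the order-by-order Kontsevich--Soibelman-type construction; and the combinatorial identification of the cluster complex inside $\mathcal D_B$, in particular that the cones $\mathcal C_{\mathbf t}$ are genuinely full-dimensional and that their wall normals reproduce the $c$-vectors, with every sign tracked correctly through \eqref{eq:Bmut1}. The positivity bookkeeping in $\mathcal D_B$ is what ultimately forces sign coherence, and it is simultaneously the most delicate point, since exponents and signs must be kept straight through each mutation.

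An alternative, valid in the skew-symmetric case, is the representation-theoretic route of \cite{DWZ}: realize $B$ as a quiver $Q$, choose a non-degenerate potential $W$ on $Q$ (one exists), and consider the decorated representation of $(Q,W)$ obtained by applying to a negative simple the mutation sequence that produces $\mathbf t$. Using the compatibility of mutation of quivers with potentials with the $E$-invariant, together with the absence of loops and $2$-cycles, one shows by induction on the length of the sequence that this decorated representation is always either a genuine module (with trivial decoration) or a shifted simple (a purely decorated object), never a nontrivial mixture of the two; the $j$-th $c$-vector of $\mathbf t$ is then $\pm$ the dimension vector of that decorated representation, and this dichotomy is exactly its sign coherence.
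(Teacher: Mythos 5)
The paper does not prove this statement at all: Theorem \ref{sign_coh} is quoted as known background, with the proof attributed to \cite{DWZ} in the skew-symmetric case and \cite{GHKK} in the skew-symmetrizable case (both routed through \cite[Proposition 5.6]{CAIV}, i.e.\ the equivalence with $F$-polynomials having constant term $1$). Your proposal is therefore not in conflict with the paper --- it is an accurate high-level roadmap of exactly the two cited proofs, and the two key assertions you isolate (positivity of all wall directions in the consistent scattering diagram $\mathcal D_B$, and the identification of the cluster chambers so that the $c$-vectors are $\pm$ the primitive wall normals; respectively, the positive-vs-shifted-simple dichotomy controlled by the $E$-invariant in the \cite{DWZ} route) are indeed the correct load-bearing points. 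But judged as a standalone proof it has a genuine gap, one you yourself flag: everything you defer as ``the hard part'' --- the order-by-order Kontsevich--Soibelman construction with its positivity bookkeeping, the full-dimensionality of the $\mathbf g$-vector cones and the matching of wall normals against the recursion \eqref{eq:Bmut1}, or alternatively the nondegenerate-potential and $E$-invariant induction --- is the entire mathematical content of the theorem, and none of it is carried out. So the proposal should be read as a correct citation-level summary, on par with what the paper itself does, rather than as a proof.
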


In this note, we are interested in a behavior of a more general version of $c$-vectors: namely, we will denote by $C$ the bottom $M\times N$ submatrix of $\hat B$  and refer to its columns as $c$-vectors. Whenever we will need to invoke the original definition of $c$-vectors, we will call them {\em principal} $c$-vectors.

We begin with the following example.
\begin{ex}
\label{random} 
Let $B$ be the adjacency matrix for the  quiver below. This quiver is a rather randomly chosen one with four vertices.
\medskip

\begin{center}
\begin{tikzpicture}
\tikzset{vertex/.style = {shape=circle,draw, minimum size=1.5em}}
\tikzset{edge/.style = {->,> = latex'}}
\node[vertex] (1) at  (0,0) {$1$};
\node[vertex] (2) at  (2,0) {$2$};
\node[vertex] (3) at  (0,-2) {$4$};
\node[vertex] (4) at  (2,-2) {$3$};
\draw[edge] (1) to (2);
\draw[edge] (3.100) to (1.260) ;
\draw[edge] (2) to (3);
\draw[edge] (2) to (4);
\draw[edge] (2.290) to (4.70);
\draw[edge] (2.250) to (4.110);

\draw[edge] (3.80) to (1.280) ;
\draw[edge] (3) to (4);
\draw[edge] (4) to (1);
\end{tikzpicture}
\end{center}

\medskip

Consider a sequence, also rather randomly chosen,  of $C$-matrix mutations, together with the initial $C$-matrix $C[0]$, specified below:

\begin{small}
\begin{equation*}
\begin{split}
&C[0]=\begin{pmatrix} 1 & \text{-}1&2&1\\ \text{-}2&1&3 &\text{-}2\\ \text{-}1&1&\text{-}1&0\\0&\text{-}1&\text{-}1&\text{-} 2 \end{pmatrix}\ {\stackrel{1} {\to} }\ 
C[1]=\begin{pmatrix} \text{-}1 & 0 & 2 &1\\ 2 & 1 & 1 & \text{-}6\\ \text{-}1 & 2 & \text{-}1 & 0\\ 0 & \text{-}1 & \text{-}1 & \text{-}2 \end{pmatrix} \ {\stackrel{3} {\to} }\\
&C[2]=\begin{pmatrix} \text{-}1 & 0 & \text{-}2 &1\\ 2 & 1 & \text{-}1 & -6\\ \text{-}2 & 0 & 1 & \text{-}1\\ \text{-}1& \text{-}3 & 1 & \text{-}3 \end{pmatrix}\  {\stackrel{4} {\to} } \ 
C[3]=\begin{pmatrix}\text{-}1&3&\text{-}2&\text{-}1\\ \text{-}10&1&\text{-}7&6\\ \text{-}4&0&0&1\\ \text{-}7&\text{-}3&\text{-}2&3\end{pmatrix} {\stackrel{2} {\to} }\\
&C[4]=\begin{pmatrix} \text{-}1 & \text{-}3 & \text{-}2 &8\\ \text{-}10 & \text{-}1 & \text{-}7 & 9\\ \text{-}4 & 0 & 0 & 1\\ \text{-}22 & 3 & \text{-}17 & 3 \end{pmatrix} \ {{\stackrel{3} {\to} }}\ 
C[5]=\begin{pmatrix} \text{-}1 & \text{-}13 & 2 & 8\\ \text{-}10 & \text{-}36 & 7 & 9\\ \text{-}4 & 0 & 0 & 1\\ \text{-}22& \text{-}82 & 17 & 3 \end{pmatrix}\ {\stackrel{1} {\to} }\\
&C[6]=\begin{pmatrix} 1 & \text{-}23 & 2 & 8\\ 10 & \text{-}136 & 7 & 9\\ 4 & \text{-}40 & 0 & 1\\ 22 & \text{-}302 & 17 & 3 \end{pmatrix} \ {\stackrel{4} {\to} }\ 
C[7]=\begin{pmatrix} 105 & \text{-}23 & 114 & \text{-}8\\ 127 & \text{-}136 & 133 & \text{-}9\\ 17 & \text{-}40 &14 & \text{-}1\\ 61 & \text{-}302 & 59 & \text{-}3 \end{pmatrix}\  {\stackrel{1} {\to} }\\
&C[8]=\begin{pmatrix} \text{-}105 & \text{-}23 & 114 & 1357\\ \text{-}127 & \text{-}136 & 133 & 1642\\ \text{-}17 & \text{-}40 & 14 & 220\\ \text{-}61& \text{-}302 & 59 & 790 \end{pmatrix}\ {\stackrel{\cdots} {\to} }
\end{split}
\end{equation*}
\end{small}

\end{ex}

Example above is just one of many we have considered and all of these examples exhibited the same phenomenon: {\em after sufficiently many random mutations, $C$-vectors become sign-coherent}.

To frame these observations as an explicit conjecture, we will only need to consider cluster algebras with a single frozen variable since mutations of rows of a $C$-matrix are independent of  each other by \eqref{eq:Bmut1}.
We fix a skew-symmetrizable $N\times N$ matrix $B$ and an integer vector $\mathbf a = (a_1,\ldots, a_N)$ and define an $(N+1)\times N$ 
matrix $\hat B$ by appending $\mathbf a$ to $B$ as the $(N+1)$th row. 

Let $\mu=\left ( \mu_{k_j} \right )_{j=1}^\infty$ ($k_j\in [1,N]$) be a sequence of matrix mutations applied to $\hat B$.
We denote $\mu_{k_n}\circ\cdots\circ\mu_{k_2}\circ\mu_{k_1}$ by $\mu^{(n)}$,  $\mu^{(n)} (\hat B)$ by ${\hat B}^{(n)}$ and the last row
of ${\hat B}^{(n)}$ by ${\mathbf a}^{(n)}= (a_1^{(n)},\ldots, a_N^{(n)})$. We use a convention ${\hat B}^{(0)}={\hat B}, {\mathbf a}^{(0)}={\mathbf a}$. Note that if ${\mathbf a}=0$, then ${\mathbf a}^{(n)}=0$ for any $n$ by \eqref{eq:Bmut1}. Therefore, from now on we assume that ${\mathbf a}$ is a {\em nonzero} vector.  

Let $\hat B_{pr}$ denote an $N\times 2N$ matrix $\begin{bmatrix} B\\ \ \one_N\end{bmatrix}$ (in other words, $\hat B_{pr}$ is the initial exchange matrix for the cluster algebra with principal coefficients defined by $B$). We define {\em the distance} $\mbox{dist}(\hat B_{pr}, \mu^{(n)}({\hat B_{pr}})) \leq n$  as the {\em minimal} number of matrix mutations needed 
to obtain  $\mu^{(n)}({\hat B_{pr}})$ from $\hat B_{pr}$. This notion of  distance agrees with {\em the distance in the exchange graph of labelled seeds} in the cluster algebra associated with $B$ (with any coefficients), see, e.g., \cite{synchr}.

We say that the sequence of mutatons $\mu$ is {\em monotone} (with respect to $B$) if 
\begin{equation}
\label{monot_seq}
\mbox{dist}(\hat B_{pr}, \mu^{(n+1)}({\hat B_{pr}})) > \mbox{dist}(\hat B_{pr}, \mu^{(n)}({\hat B_{pr}})) \ (n=1,2,\ldots).
\end{equation}


We say that $\mu$ is {\em balanced} if for every $k\in [1,N]$, 
\[
\liminf_{n\to\infty}\frac  {1} {n} \# \{ j\in [1,n] : k_j=k \} > 0.
\]

For an integer $a$, we define
\[
\sgn(a)=\begin{cases} + & a>0\\ 0 & a=0\\ - & a<0 \end{cases}.
\]
We say that a sign vector $\omega=(\omega_1,\dots,\omega_N)\in \{+, 0, -\}^{N}$ is {\em strict} if $\omega_i \neq 0$ for any $i$.
Define {\em the sign vector corresponding to} $\mu^{(n)}$ as
\begin{equation}
\label{sign_vect}
\sigma_\mu^{(n)}(\mathbf a) = \left (\sgn(a_1^{(n)}),\ldots, \sgn(a_N^{(n)})\right ).
\end{equation}
The {\em sign pattern  corresponding to} $\mu$ is defined by
\begin{equation}
\label{sign_seq}
\sigma_\mu(\mathbf a)=\left (\sigma_\mu^{(n)}(\mathbf a)\right )_{n=0}^\infty.
\end{equation}

\begin{conj}[{\bf Asymptotic sign coherence}]
\label{main}
Let $B=\hat B_{[N]}$ be irreducible and let $\mu$ be a monotone and balanced sequence of mutations applied to $\hat B$. Then there exists a sequence of strict sign vectors
\[
\sigma_{\mathrm{reg}}= \left(  \sigma_{\mathrm{reg}}^{(n)}\right )_{n=0}^\infty,  \sigma_{\mathrm{reg}}^{(n)} \in \{\pm\}^N
\]
such that for any nonzero ${\mathbf a} \in \mathbb{Z}^N$ there  is $T\in \mathbb{N}$ such that
$\sigma_\mu^{(n)}(\mathbf a)=\sigma_{\mathrm{reg}}^{(n)}$
for $n > T$.
\end{conj}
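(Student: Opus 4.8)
The plan is to track the evolution of the last row $\mathbf a^{(n)}$ by comparing it with the principal $c$-vectors, which are already known to be sign coherent by Theorem~\ref{sign_coh}. Observe that the mutation rule \eqref{eq:Bmut1} for the last row, with $\varepsilon$ chosen once and for all, is \emph{affine} in $\mathbf a$: namely $a_j^{(n+1)} = a_j^{(n)} + [-\varepsilon b_{k_n k_n}^{(n)}]_+\,\cdots$ — more precisely, since the bottom row does not feed back into $B$, the map $\mathbf a^{(n)} \mapsto \mathbf a^{(n+1)}$ is given by a matrix $M_n \in GL_N(\mathbb Z)$ depending only on $B^{(n)}$ and $k_n$ (the $c$-vector recursion). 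Thus $\mathbf a^{(n)} = M_n M_{n-1}\cdots M_1 \,\mathbf a =: G^{(n)} \mathbf a$, and the columns of $G^{(n)}$ are exactly the principal $c$-vectors of $\mu^{(n)}(\hat B_{pr})$ at the distance realized by $\mu^{(n)}$ — at least up to the subtlety that $\mu$ need not be a minimal path, which is where monotonicity enters.

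The first step is therefore to set up this linear algebra carefully: define $G^{(n)}$, verify that its columns are (signed) principal $c$-vectors so that each column is sign coherent with a well-defined sign $\sigma_{\mathrm{reg}}^{(n)} \in \{\pm\}^N$ read off the $i$th column (this should be the candidate regular sign sequence, once one checks it is independent of $i$; more likely $\sigma_{\mathrm{reg}}^{(n)}$ is built from the sign of whichever coordinate dominates). The second step is to show that $G^{(n)}$ becomes, after rescaling, rank one in an asymptotic sense: that is, there exist nonzero $u^{(n)} \in \mathbb Z^N$ and a linear functional such that $G^{(n)} / \|G^{(n)}\| \to u^{(n)} v^{(n)T}$ grows so that for \emph{generic} $\mathbf a$ the vector $\mathbf a^{(n)} = G^{(n)}\mathbf a$ is eventually a positive multiple of $\pm u^{(n)}$, whose sign is $\sigma_{\mathrm{reg}}^{(n)}$. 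The monotonicity hypothesis guarantees the distance, hence the entries of $G^{(n)}$, genuinely grow without bound (seeds are not revisited, so by sign coherence of principal $c$-vectors and the fact that infinitely many mutations in infinite type force unbounded $c$-vectors), and the balanced hypothesis guarantees that \emph{every} coordinate direction is refreshed infinitely often, preventing the rank-one limit from being supported on a proper coordinate subspace and forcing the dominated directions to remain dominated; this rules out the pathology where two coordinates of $\mathbf a^{(n)}$ stay comparable in size forever with opposite signs.

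For the exceptional set: since $G^{(n)} \in GL_N(\mathbb Z)$, the only way $\sigma_\mu^{(n)}(\mathbf a)$ can fail to stabilize to $\sigma_{\mathrm{reg}}^{(n)}$ is that $\mathbf a$ lies, for infinitely many $n$, near a hyperplane where the leading behavior of $G^{(n)}\mathbf a$ degenerates. The third step is to show that for a \emph{fixed} nonzero integer $\mathbf a$ this can happen only finitely often, i.e. that the ``bad'' hyperplanes $H_n \subset \mathbb R^N$ (kernels of the dominant functionals) recede to a single limiting hyperplane not containing $\mathbf a$, or that the integrality of $\mathbf a$ together with the growth of $G^{(n)}$ forces $|\langle \text{dominant functional}, \mathbf a\rangle|$ to stay bounded below once $\mathbf a \notin \bigcap_n H_n$. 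Here one leans on the rank-$2$ case (Section~3) as a model: there $G^{(n)}$ is a product of $2\times 2$ matrices and the analysis reduces to continued-fraction / hyperbolic-element dynamics, with the Markov-quiver example (Section~4) as a guide to how $G^{(n)}$ degenerates when $B$ is not of finite type but has a nontrivial semifield of tropical signs.

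The main obstacle I anticipate is exactly this last step: controlling the \emph{joint} behavior of the sequence of hyperplanes $H_n$ and proving they cannot oscillate in a way that catches a fixed integral $\mathbf a$ infinitely often. In the rank-$2$ and Markov cases one has an explicit model for $G^{(n)}$ — essentially a random product of a fixed finite set of $GL_N(\mathbb Z)$ matrices — and one can invoke positivity (Perron--Frobenius type) arguments because the relevant matrices preserve a cone; in general it is not clear the $c$-vector matrices $M_n$ preserve any common cone, so the ``asymptotic rank one'' claim and the non-oscillation of $H_n$ are genuinely conjectural. A secondary difficulty is the interface between the path $\mu^{(n)}$ and the geodesic it shadows: monotonicity says the distance strictly increases but not that $\mu^{(n)}$ \emph{is} a geodesic, so one must argue that $G^{(n)}$ (a product of length $n$) still has the growth dictated by the distance $\mathrm{dist}(\hat B_{pr}, \mu^{(n)}(\hat B_{pr}))$ — plausibly via synchronicity of mutations (the cited \cite{synchr}) and the fact that a non-geodesic detour contributes an invertible matrix that does not shrink the dominant direction, but this needs care.
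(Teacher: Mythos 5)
The first thing to say is that the paper does not prove this statement: it is stated as a conjecture, and the authors verify it only in two special cases --- rank $2$ of infinite type (Section 3, via closed-form Chebyshev-polynomial expressions for $a_1^{(n)},a_2^{(n)}$ and a finite case analysis of sign patterns) and the Markov quiver with the cyclic sequence $\mu=(1,2,3,1,2,3,\ldots)$ (Section 4, via a Fibonacci recursion and the irrationality of the golden ratio). So there is no general proof in the paper for your attempt to be compared against, and any complete argument you produced would be new. As you yourself concede, the central steps of your plan (the ``asymptotic rank one'' claim and the non-oscillation of the hyperplanes $H_n$) are left conjectural, so what you have is a research program, not a proof.

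Beyond that, there is a concrete error at the foundation of step one. By \eqref{eq:Bmut1} the last row evolves as $a'_j = a_j + [-\varepsilon a_k]_+\, b_{kj} + a_k\,[\varepsilon b_{kj}]_+$ for $j\neq k$ and $a'_k=-a_k$. Although the result is independent of $\varepsilon$, for any fixed $\varepsilon$ this is only \emph{piecewise} linear in $\mathbf a$: to write the step as a single matrix acting on $\mathbf a^{(n)}$ one must take $\varepsilon=\sgn\bigl(a_{k_n}^{(n)}\bigr)$, so that $a'_j=a_j+a_k[\varepsilon b_{kj}]_+$; hence the transition matrix $M_n$ depends on the sign of a coordinate of $\mathbf a^{(n)}$, i.e.\ on $\mathbf a$ itself. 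Consequently $G^{(n)}=M_n\cdots M_1$ is not a fixed cocycle whose columns are principal $c$-vectors: it coincides with the principal $C$-matrix evolution only along the branch where the signs of $\mathbf a^{(n)}$ agree with the tropical signs of the principal pattern, and proving that every nonzero integral $\mathbf a$ eventually lands on and stays on that branch is not a preliminary reduction --- it is essentially the whole content of the conjecture. This is precisely why the two cases the paper does handle are treated by explicit formulas and exhaustive tracking of which branch of the piecewise-linear map is taken at each step, rather than by any general linear-dynamics or Perron--Frobenius argument; your own caveat that the matrices $M_n$ need not preserve a common cone is the symptom of this same difficulty.
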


\begin{rems}
\label{conj_rem}

1. If the $C$-matrix part of $\hat B$ has multiple rows (i.e., $M\geq 2$), Conjecture \ref{main} means that for a monotone and balanced sequence of mutations, for all $n$ greater than certain $T$, the sign patterns of any two nonzero rows of the $C$-matrix $C^{(n)}$ obtained on the $n$th step coincide. But that means that the columns of $C^{(n)}$ are sign-coherent. This justifies the term {\em asymptotic sign coherence}.

2. 
The monotonicity assumption precludes the initial exchange matrix $B$ from being an exchange matrix of a cluster algebra of finite type.

3. Both conditions in Conjecture \ref{main} are used to ensure that the mutation sequence $\mu$ is sufficiently random. Both of these conditions could be relaxed or modified in various ways. In particular,  one could require that a magnitude of every entry of the $C$-matrix tends to infinity as $n$ goes to infinity. One could also weaken the condition of $\mu$ being balanced by replacing it with the requirement that every mutation direction appears in $\mu$ infinitely many times. In this case, we call $\mu$ {\em weakly balanced} (see Remark \ref{rem_Markov} below).
\end{rems}

\section{Rank 2 case}

In this section, we will verify Conjecture \ref{main} in the infinite type  rank $2$ case. 
The matrix $\hat B$ in this case has a form
\begin{equation}
\label{3by2init}
\hat B=\hat B^{(0)}= \begin{bmatrix} 0 & -p \\ q & 0\\ a^{(0)}_1 & a^{(0)}_2\end{bmatrix}, 
\end{equation}
where positive integer parameters $p,q$ satisfy $pq\geq 4$ \cite[Section 6]{CAI}. We assume that at least one of $a^{(0)}_1, a^{(0)}_2$ is nonzero.


In the rank $2$ case, the monotonicity assumption for an infinite sequence of mutations amounts to a requirement that mutations $\mu_1$ and $\mu_2$ alternate. Instead of considering two possible monotone sequences of mutations, we will unify them into a single double-infinite sequence. Namely, we consider a sequence of mutations $\mu_1,\mu_2,\mu_1,\mu_2,\ldots$ and denote the exchange matrix obtained at the $n$th step of this sequence by $\hat B^{(n)}$. 
Similarly, the exchange matrix obtained at the $n$th step of a sequence of mutations $\mu_2,\mu_1,\mu_2,\mu_1,\ldots$ will be denoted by $\hat B^{(-n)}$. 
Then, for any $n\in \mathbb{Z}$,
\begin{equation}
\label{3by2n}
\hat B^{(n)}= \begin{bmatrix} 0 & (-)^{n-1}p \\ (-)^{n}q & 0\\ a^{(n)}_1 & a^{(n)}_2\end{bmatrix}.
\end{equation}
Choosing $\varepsilon=1$ in \eqref{eq:Bmut1}, we obtain the following recursion for $a^{(n)}_1, a^{(n)}_2 (n\geq 0)$:
\begin{equation} 
\label{odd_a+}
\begin{split}
& a^{(2n+1)}_1 = -  a^{(2n)}_1,\\
& a^{(2n+1)}_2 = a^{(2n)}_2 - \left [-  a^{(2n)}_1 \right ]_+ p,\\
\end{split}
\end{equation}
\begin{equation}
\label{even_a+}
\begin{split}
& a^{(2n+2)}_1 = a^{(2n+1)}_1 - \left [-  a^{(2n+1)}_2 \right ]_+  q,\\
& a^{(2n+2)}_2 = -  a^{(2n+1)}_2.
\end{split}
\end{equation}
For $n\leq 0$, we choose $\varepsilon=-1$, and obtain
\begin{equation}
\label{odd_a-}
\begin{split}
& a^{(-2n-1)}_1 =a^{(-2n)}_1 + \left [a^{(-2n)}_2 \right ]_+  q,\\
& a^{(-2n-1)}_2 = -a^{(-2n)}_2,
\end{split}
\end{equation}
\begin{equation}
\label{even_a-}
\begin{split}
& a^{(-2n-2)}_1 = -  a^{(-2n-1)}_1,\\
& a^{(-2n-2)}_2 = a^{(-2n-1)}_2 + \left [a^{(-2n-1)}_1 \right ]_+  p.
\end{split}
\end{equation}

We want to investigate a dependence of signs of  $a^{(n)}_1, a^{(n)}_2$ on initial values $a^{(0)}_1, a^{(0)}_2$.
As in \eqref{sign_vect}, we define a {\em sign vector} 
$$ 
\sigma^{(n)}=\left(\mbox{sign}(a^{(n)}_1), \mbox{sign}(a^{(n)}_2)\right ).
$$ 
If an expression is only known to be nonnegative (resp. nonpositive), we will denote its sign by $+/0$ (resp. $-/0$).
Following the definition \eqref{sign_seq}, we call a sequence
\begin{equation}
\label{sign_pattern}
\sigma (a^{(0)}_1, a^{(0)}_2) = \left (\sigma^{(n)}\right)_{n\in \mathbb{Z}}
\end{equation}
the {\em sign pattern} with initial values $a^{(0)}_1, a^{(0)}_2$.

We define the sequence $\sigma_{\mathrm{reg}}$ of strict sign vectors as
\begin{equation}
\label{stable}
\sigma_{\mathrm{reg}}  = \left (\left ( (-)^{n-1}, (-)^{n}\right ) \right )_{n\in \mathbb{Z}}.
\end{equation}
We are going to show that for any $a^{(0)}_1, a^{(0)}_2$, the sequence $\sigma (a^{(0)}_1, a^{(0)}_2)$ differs from $\sigma_{\mathrm{reg}}$ for only 
finitely many components.

In order to proceed, we need to recall some of the properties of {\em the Chebyshev polynomials $U_n(t)\ (n\in \mathbb{Z}_{\geq -1})$ of the second kind} (see, e.g. \cite[Chapter 6]{Chebyshev}). They satisfy a three-term recursion of the form
\begin{equation}
\label{cheb_rec}
2 t U_n(t) = U_{n-1}(t) + U_{n+1}(t) \ (n \geq 0);\ U_{-1}(t)=0, U_{0}(t)=1,
\end{equation}
and are orthogonal with respect to the positive weight $\sqrt{1-t^2}$ on the interval $[-1,1]$. As a result, 
\begin{equation}
\label{cheb_pos}
U_n(t) > 0 \ (n\geq 0, t > 1).
\end{equation}
Furthermore, 
\begin{equation}
\label{wronskian}
U^2_n(t) - U_{n-1}(t) U_{n+1}(t) =1\ (n=0,1,\ldots),
\end{equation}
and therefore
\begin{equation}
\label{monotone}
\frac{U_{n}(t)} {U_{n-1}(t)} > \frac{U_{n+1}(t)} {U_{n}(t)} \ (n>0).
\end{equation}
There is an explicit formula for $U_n(t)$,
\begin{equation}
\label{explicit}
U_n(t) = \frac{\left ( t + \sqrt{t^2-1} \right )^{n+1} - \left ( t - \sqrt{t^2-1} \right )^{n+1}} { 2 \sqrt{t^2-1}},
\end{equation}
which implies, in particular, that
\begin{equation}
\label{limit}
\lim_{n\to\infty}\frac{U_{n+1}(t)}{U_{n}(t)} =t + \sqrt{t^2-1} \ (\vert t \vert \geq 1).
\end{equation}

The role played by Chebyshev polynomials in the study of rank 2 cluster algebras was previously observed by several authors, notably in 
\cite{LS1,LS2} where it was utilized in initial steps of the proof of the Laurent positivity conjecture, and, more recently, in \cite{R} where closely related polynomials were used in a description of rank 2 infinite type cluster scattering diagrams. Below, we use Chebyshev polynomials to investigate the sign pattern $\sigma (a^{(0)}_1, a^{(0)}_2)$.

Denote 
$$\kappa = \sqrt{pq}\ ,\  \nu = \sqrt{\frac p q}.
$$

\begin{prop} If $a^{(0)}_1=a_1, a^{(0)}_2=a_2 \geq 0$, then
\label{++}
 \[
 a_1^{(1)} = -a_1, \ a_2^{(1)} = a_2,  a_1^{(2)} = -a_1,\ a_2^{(2)} = -a_2,
 \]
for $n\geq 2$
\begin{equation}
\label{even_an+}
\begin{split}
& a^{(2n)}_1 = - a_1 U_{2n-2}\left (\frac \kappa 2 \right ) - a_2 \nu^{-1}U_{2n-3}\left (\frac \kappa 2 \right ) < 0,\\
& a^{(2n)}_2 = a_1 \nu U_{2n-3}\left (\frac \kappa 2 \right ) + a_2 U_{2n-4}\left (\frac \kappa 2 \right ) > 0, \\
\end{split}
\end{equation}
for $n\geq 1$
\begin{equation}
\label{odd_an+}
\begin{split}
& a^{(2n+1)}_1 = a_1 U_{2n-2}\left (\frac \kappa 2 \right ) + a_2 \nu^{-1}U_{2n-3}\left (\frac \kappa 2 \right ) \geq 0,\\
& a^{(2n+1)}_2 = - a_1 \nu U_{2n-1}\left (\frac \kappa 2 \right ) - a_2 U_{2n-2}\left (\frac \kappa 2 \right ) < 0, 
\end{split}
\end{equation}
with the first inequality in \eqref{odd_an+} strict for $n>1$,\\
and, for $n\geq 0$
\begin{equation}
\label{even_an-}
\begin{split}
& a^{(-2n-2)}_1 = - a_1 U_{2n}\left (\frac \kappa 2 \right ) - a_2 \nu^{-1}U_{2n+1}\left (\frac \kappa 2 \right ) < 0,\\
& a^{(-2n-2)}_2 = a_1 \nu U_{2n+1}\left (\frac \kappa 2 \right ) + a_2 U_{2n+2}\left (\frac \kappa 2 \right ) > 0,
\end{split}
\end{equation}
\begin{equation}
\label{odd_an-}
\begin{split}
& a^{(-2n-1)}_1 = a_1 U_{2n}\left (\frac \kappa 2 \right ) + a_2 \nu^{-1}U_{2n+1}\left (\frac \kappa 2 \right ) > 0,\\
& a^{(-2n-1)}_2 = - a_1 \nu U_{2n-1}\left (\frac \kappa 2 \right ) - a_2 U_{2n}\left (\frac \kappa 2 \right ) \leq 0, \\
\end{split}
\end{equation}
with the second inequality in \eqref{odd_an-} strict for $n>0$.\\
Therefore, 
$
\sigma^{(-1)} = (+, -/0), \sigma^{(0)} = (+/0, +/0), \sigma^{(1)} = (-/0, +/0),  \sigma^{(2)} = (-/0, -/0)$, $\sigma^{(3)} = (+/0, -)$ and  
\begin{equation}
\label{sign++}
\sigma^{(n)} = \left ( (-)^{n-1}, (-)^{n}\right )  (n\ne -1,0,1,2,3).
\end{equation}
\end{prop}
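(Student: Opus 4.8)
The plan is to prove the closed forms \eqref{even_an+}--\eqref{odd_an-} and then read the signs off them. Once these identities are known, every sign assertion, including \eqref{sign++}, is automatic: since $pq\ge 4$ we have $\kappa/2=\sqrt{pq}/2\ge 1$, so $U_j(\kappa/2)>0$ for all $j\ge 0$ by \eqref{cheb_pos} (with $U_j(1)=j+1$ handling the boundary case $pq=4$), while $U_{-1}(\kappa/2)=0$ and $\nu,\nu^{-1}>0$; together with the standing hypothesis that $a_1,a_2\ge 0$ are not both zero, inspecting the right-hand sides of \eqref{even_an+}--\eqref{odd_an-} yields exactly the stated signs, the weak ones $+/0$ and $-/0$ arising precisely when the surviving $U$-coefficient sits at index $-1$. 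So the proposition reduces to the algebraic identities \eqref{even_an+}--\eqref{odd_an-}, which I would prove by induction on $|n|$, the forward direction ($n\ge 0$) and the backward direction ($n\le 0$) being treated separately.

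\textbf{Transient indices.} First I would compute the small cases directly from \eqref{odd_a+}--\eqref{even_a-} using $a_1,a_2\ge 0$. Forward: $[-a^{(0)}_1]_+=0$, so \eqref{odd_a+} gives $a^{(1)}=(-a_1,a_2)$; then $[-a^{(1)}_2]_+=0$, so \eqref{even_a+} gives $a^{(2)}=(-a_1,-a_2)$; then $[-a^{(2)}_1]_+=a_1$, so \eqref{odd_a+} gives $a^{(3)}=(a_1,-a_2-a_1p)$; and $[-a^{(3)}_2]_+=a_2+a_1p$, so \eqref{even_a+} gives $a^{(4)}=\big(-(pq-1)a_1-qa_2,\,pa_1+a_2\big)$. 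Backward: $[a^{(0)}_2]_+=a_2$ and $[a^{(-1)}_1]_+=a_1+a_2q\ge 0$, so \eqref{odd_a-} and \eqref{even_a-} give $a^{(-1)}=(a_1+a_2q,-a_2)$ and $a^{(-2)}=\big(-(a_1+a_2q),\,pa_1+(pq-1)a_2\big)$. Using $U_{-1}=0$, $U_0=1$, $U_1(\kappa/2)=\kappa$, $U_2(\kappa/2)=pq-1$ and the relations $\nu\kappa=p$, $\nu^{-1}\kappa=q$, $\kappa^2=pq$, one checks these agree with \eqref{even_an+} at $n=2$, \eqref{odd_an+} at $n=1$, and \eqref{even_an-},\eqref{odd_an-} at $n=0$; they also produce the listed values of $\sigma^{(n)}$ for $n\in\{-1,0,1,2,3\}$.

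\textbf{Inductive step.} For the forward direction I would induct one mutation at a time, seeded at index $4$ by the computation above. Suppose \eqref{even_an+} holds at an even index $2n$ with $n\ge 2$. Then its right-hand side shows $a^{(2n)}_1<0<a^{(2n)}_2$ (all Chebyshev indices involved are $\ge 0$ and $(a_1,a_2)\neq 0$), so in \eqref{odd_a+} the bracket becomes $[-a^{(2n)}_1]_+=-a^{(2n)}_1$; the recursion is thereby linearized, and substituting \eqref{even_an+} and collecting the coefficients of $a_1$ and $a_2$ separately, each coefficient is an integer combination of three consecutive $U_j(\kappa/2)$ that collapses, via the three-term recursion \eqref{cheb_rec} together with $\nu\kappa=p$, $\nu^{-1}\kappa=q$, $\kappa^2=pq$, to the single value prescribed by \eqref{odd_an+} at index $2n+1$. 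From \eqref{odd_an+} we then read $a^{(2n+1)}_2<0$, so $[-a^{(2n+1)}_2]_+=-a^{(2n+1)}_2$ in \eqref{even_a+}, and the same linearize-then-collapse step produces \eqref{even_an+} at index $2n+2$. The backward direction is identical, using \eqref{odd_a-},\eqref{even_a-}; here the stabilized pattern makes $[a^{(-2n)}_2]_+=a^{(-2n)}_2$ and $[a^{(-2n-1)}_1]_+=a^{(-2n-1)}_1$ for all $n\ge 0$ (already true at $n=0$ because $a_1,a_2\ge 0$), so in fact the recursion is linear from the start on the negative side.

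\textbf{Main difficulty.} The one genuine point of care is the interlocking of the two halves of the argument: the bracket simplifications that linearize the mutation rule are legitimate only because of sign information that we are itself extracting from the formulas being proved. This is resolved by strong induction along $n$, establishing, for each index in turn, first the explicit formula and only then its sign; one must merely verify that the induction is correctly seeded past the transient, i.e.\ that the formulas already hold at indices $3,4$ (forward) and $-1,-2$ (backward), which is exactly the content of the direct computation above. The remaining work is bookkeeping: tracking the parity of the Chebyshev indices when applying \eqref{cheb_rec}, and identifying which surviving coefficient is allowed to vanish, so that the weak signs at $n=-1,1,2,3$ and the strict improvements (e.g.\ $a^{(3)}_2<0$, $a^{(-1)}_1>0$) are recorded with the correct strictness. (Alternatively one may package the stabilized two-step map as multiplication by the unimodular matrix $\left(\begin{smallmatrix} pq-1 & q\\ -p & -1\end{smallmatrix}\right)$, whose trace is $pq-2$, and invoke the standard identity $M^m=U_{m-1}(\tfrac12\operatorname{tr}M)\,M-U_{m-2}(\tfrac12\operatorname{tr}M)\,I$ together with $U_{2m-1}(\kappa/2)=\kappa\,U_{m-1}(\tfrac{pq-2}{2})$, but the bare induction is shorter to write out.)
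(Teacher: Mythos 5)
Your proposal is correct and follows essentially the same route as the paper: establish the closed Chebyshev-polynomial formulas by induction, using the sign information supplied by the induction hypothesis to linearize the $[\cdot]_+$ brackets and the three-term recursion \eqref{cheb_rec} to collapse the coefficients, then read all sign assertions off the closed forms. The only difference is organizational: the paper packages two consecutive mutations into one inductive step (its \eqref{aux3.1}) and writes out only the family \eqref{odd_an+} in detail, whereas you step one mutation at a time and verify the transient seed cases more explicitly.
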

\begin{proof} By ${\frac \kappa 2} \geq 1$ and \eqref{cheb_pos}, the expressions in \eqref{even_an+}--\eqref{odd_an-} imply the inequalities therein, therefore we obtain  \eqref{sign++}. 

 We will only verify equations in \eqref{odd_an+}. The rest of the formulas in Proposition \ref{++} can be treated the same way. Recursions \eqref{odd_a+}, \eqref{even_a+}
combine into
\begin{equation}
\label{aux3.1}
\begin{split}
& a^{(2n+3)}_1 = -  a^{(2n+1)}_1\ + \left [-  a^{(2n+1)}_2 \right ]_+ q,\\
& a^{(2n+3)}_2 =  -a^{(2n+1)}_2 - \left [a^{(2n+3)}_1 \right ]_+ p .
\end{split}
\end{equation}
For $n=0$, this gives
\begin{align*}
a_1^{(3)} &= a_1 = a_1 U_0\left (\frac \kappa 2\right ) + a_2 \nu^{-1}U_{-1}\left (\frac \kappa 2\right ), \\
a_2^{(3)} &= - a_1 p - a_2 = -a_1 \nu \kappa - a_2= -a_1 \nu U_1\left (\frac \kappa 2\right ) - a_2 U_{0}\left (\frac \kappa 2\right ),
\end{align*}
which is consistent  with \eqref{odd_an+}. We can now proceed by induction while taking into an account that the induction hypothesis stating that \eqref{odd_an+} is valid for $n\leq m$ also ensures that $a_1^{(2n+1)}\geq 0 \geq a_2^{(2n+1)}$ for $n\leq m$.  (Here we use the inequality $\frac \kappa 2 \geq 1$ and \eqref{cheb_pos}.) Then  equations in \eqref{aux3.1} become
\begin{align*}
a^{(2n+3)}_1 &= -  a_1 U_{2n-2}\left (\frac \kappa 2 \right ) - a_2 \nu^{-1}U_{2n-3}\left (\frac \kappa 2 \right )
+ \left (a_1 \nu U_{2n-1}\left (\frac \kappa 2 \right ) + a_2 U_{2n-2}\left (\frac \kappa 2 \right )\right ) q\\
&= a_1 \left (\kappa U_{2n-1}\left (\frac \kappa 2 \right ) -  U_{2n-2}\left (\frac \kappa 2 \right ) \right ) + a_2 \nu^{-1}\left ( - U_{2n-3}\left (\frac \kappa 2 \right ) + \kappa U_{2n-2}\left (\frac \kappa 2 \right )\right )  \\
&\stackrel{\eqref{cheb_rec}}{=} a_1 U_{2n}\left (\frac \kappa 2 \right ) +  a_2 \nu^{-1} U_{2n-1}\left (\frac \kappa 2 \right ), 
\end{align*}
\begin{align*}
a^{(2n+3)}_2 &=   a_1 \nu U_{2n-1}\left (\frac \kappa 2 \right ) + a_2 U_{2n-2}\left (\frac \kappa 2 \right ) -
 \left (a_1 U_{2n}\left (\frac \kappa 2 \right ) + a_2 \nu^{-1}U_{2n-1}\left (\frac \kappa 2 \right )\right ) p\\
&= a_1 \left ( U_{2n-1}\left (\frac \kappa 2 \right ) -  \kappa U_{2n}\left (\frac \kappa 2 \right ) \right )\nu + a_2 \left ( U_{2n-2}\left (\frac \kappa 2 \right ) - \kappa U_{2n-1}\left (\frac \kappa 2 \right )\right )  \\
&\stackrel{\eqref{cheb_rec}}{=} -a_1 \nu U_{2n+1}\left (\frac \kappa 2 \right ) -  a_2 U_{2n}\left (\frac \kappa 2 \right ), 
\end{align*}
which proves \eqref{odd_an+}.
\end{proof}


Proposition \ref{++} shows that if $a^{(0)}_1, a^{(0)}_2$ are both nonnegative  then the sign pattern $\sigma (a^{(0)}_1, a^{(0)}_2)$ does not depend
on precise values of $a^{(0)}_1, a^{(0)}_2$ and we are justified in using a notation $\sigma _{++}$ for such sign pattern.

Furthermore, if $a^{(0)}_1, a^{(0)}_2$ are both nonpositive, then Proposition \ref{++}  also implies that $\sigma (a^{(0)}_1, a^{(0)}_2) = \left (\sigma_{++}^{(n+2)}\right)_{n\in \mathbb{Z}}$ also does not depend on precise values of $a^{(0)}_1, a^{(0)}_2$ and can be denoted by $\sigma _{--}$.

The case $a^{(0)}_1=a_1 > 0, a^{(0)}_2=-a_2 < 0$ is still covered by Proposition \ref{++}: if one switches the roles of $a^{(n)}_1, a^{(n)}_2$ and also the roles of of $p$ and $q$, then
the corresponding matrix
\[
B^{(0)}= \begin{bmatrix} 0 & q \\ -p & 0\\ -a_2 & a_1\end{bmatrix}
\]
can be seen as ${\tilde B}^{(1)}$ for 
\[
{\tilde B^{(0)}}= \begin{bmatrix} 0 & -q \\ p & 0\\ a_2 & a_1\end{bmatrix}\ 
\]
and so 
\begin{equation}
\label{sigma_+-}
\sigma_{+-}:=\sigma (a^{(0)}_1, a^{(0)}_2) = \left (\tau(\sigma_{++}^{(n+1)})\right)_{n\in \mathbb{Z}},
\end{equation}
where $\tau$ permutes the entries of a two-vector.

The situation is different and more complicated if $a^{(0)}_1=-a_1 < 0 , a^{(0)}_2= a_2 > 0$. In view of \eqref{monotone} and \eqref{explicit}, we separate it into three cases: 
\medskip

Case 1.  $\frac{ 2\nu} { \kappa + \sqrt{\kappa^2 - 4} }\leq  \frac{a_2}{a_1} \leq \frac \nu 2 \left ( \kappa + \sqrt{\kappa^2 - 4}\right )$.
\medskip

Case 2. $\frac \nu 2 \left ( \kappa + \sqrt{\kappa^2 - 4}\right ) < \frac{a_2}{a_1}$.
\medskip

Case 3. $\frac{a_2}{a_1} < \frac{ 2\nu} { \kappa + \sqrt{\kappa^2 - 4} }$.
\medskip

Let us start with Case 1.

\begin{prop}
\label{-+}
 Let  $a^{(0)}_1=-a_1 < 0 , a^{(0)}_2= a_2 > 0$. 

If $\frac{a_2}{a_1} \in \left [ \frac{ 2\nu} { \kappa + \sqrt{\kappa^2 - 4} }, \frac \nu 2 \left ( \kappa + \sqrt{\kappa^2 - 4}\right ) \right ]$, then
for $n\geq 1$, 
\begin{equation}
\label{even_an-++}
\begin{split}
& a^{(2n)}_1 = - a_1 U_{2n}\left (\frac \kappa 2 \right ) + a_2 \nu^{-1}U_{2n-1}\left (\frac \kappa 2 \right ) < 0,\\
& a^{(2n)}_2 = a_1 \nu U_{2n-1}\left (\frac \kappa 2 \right ) - a_2 U_{2n-2}\left (\frac \kappa 2 \right ) > 0,\\
\end{split}
\end{equation}
and for $n\geq 0$,
\begin{equation}
\label{odd_an-++}
\begin{split}
& a^{(2n+1)}_1 = a_1 U_{2n}\left (\frac \kappa 2 \right ) - a_2 \nu^{-1}U_{2n-1}\left (\frac \kappa 2 \right ) > 0,\\
& a^{(2n+1)}_2 = - a_1 \nu U_{2n+1}\left (\frac \kappa 2 \right ) + a_2 U_{2n}\left (\frac \kappa 2 \right )\ < 0,
\end{split}
\end{equation}
\begin{equation}
\label{even_an-+-}
\begin{split}
& a^{(-2n-2)}_1 = a_1 U_{2n}\left (\frac \kappa 2 \right ) - a_2 \nu^{-1}U_{2n+1}\left (\frac \kappa 2 \right ) < 0,\\
& a^{(-2n-2)}_2 = -a_1 \nu U_{2n+1}\left (\frac \kappa 2 \right ) + a_2 U_{2n+2}\left (\frac \kappa 2 \right ) \ > 0,\\
\end{split}
\end{equation}
\begin{equation}
\label{odd_an-+-}
\begin{split}
& a^{(-2n-1)}_1 = -a_1 U_{2n}\left (\frac \kappa 2 \right ) + a_2 \nu^{-1}U_{2n+1}\left (\frac \kappa 2 \right ) > 0,\\
& a^{(-2n-1)}_2 = a_1 \nu U_{2n-1}\left (\frac \kappa 2 \right ) - a_2 U_{2n}\left (\frac \kappa 2 \right ) \ < 0.  \\
\end{split}
\end{equation}
Therefore,
\begin{equation}
\label{sign_reg}
\sigma (a^{(0)}_1, a^{(0)}_2) =\sigma_{\mathrm{reg}},
\end{equation}
where $\sigma_{\mathrm{reg}}$ is the one in \eqref{stable}.
\end{prop}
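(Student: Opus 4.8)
The plan is to establish the four families of formulas \eqref{even_an-++}--\eqref{odd_an-+-} by induction on $n$, exactly in the style of the proof of Proposition \ref{++}, and then read off the sign statements from them using $\frac{\kappa}{2}\geq 1$, the positivity \eqref{cheb_pos}, and the hypothesis on $\frac{a_2}{a_1}$. The key structural observation is that the recursions \eqref{odd_a+}--\eqref{even_a+} and \eqref{odd_a-}--\eqref{even_a-} only simplify to linear (Chebyshev) recursions once we know the signs of the relevant entries so that the $[\,\cdot\,]_+$ brackets can be resolved; so the induction must carry the sign inequalities as part of the hypothesis, not merely the algebraic identities. Concretely, I would first check the base cases $n=0,1$ directly from \eqref{odd_a+}--\eqref{even_a+} (and $n=0$ from \eqref{odd_a-}--\eqref{even_a-} for the negative side), confirming agreement with \eqref{even_an-++}--\eqref{odd_an-+-} using $U_{-1}=0$, $U_0=1$, $U_1(\tfrac\kappa2)=\kappa$.

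For the inductive step on the positive side I would combine \eqref{odd_a+} and \eqref{even_a+} into a two-step recursion (as in \eqref{aux3.1}), substitute the assumed expressions for $a^{(2n)}_1,a^{(2n)}_2$, collect the $a_1$ and $a_2$ coefficients, and apply the three-term recursion \eqref{cheb_rec} in the form $\kappa U_{m-1}(\tfrac\kappa2)-U_{m-2}(\tfrac\kappa2)=U_m(\tfrac\kappa2)$ to recognize the result as the claimed expression for $a^{(2n+2)}_1,a^{(2n+2)}_2$ (and similarly for the odd-to-even and even-to-odd passages). The negative-index direction is handled identically using \eqref{odd_a-}--\eqref{even_a-}; note that the negative-side formulas have the same shape with $n$ shifted, so the same Chebyshev manipulation applies. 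The passage between the $n\geq 0$ block and the $n\leq 0$ block is matched at $n=0$, where \eqref{even_an-++}--\eqref{odd_an-+-} should all be mutually consistent and consistent with $\hat B^{(0)}$.

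The genuinely delicate point — the one that uses the Case 1 hypothesis in an essential way — is verifying the strict inequalities at every step, i.e.\ that $[-a^{(2n+1)}_2]_+=a^{(2n+1)}_2$ is actually the correct resolution of the bracket. Writing $a^{(2n+1)}_1=a_1U_{2n}-a_2\nu^{-1}U_{2n-1}$ and $a^{(2n+1)}_2=-a_1\nu U_{2n+1}+a_2U_{2n}$ (arguments $\tfrac\kappa2$ suppressed), positivity of $a^{(2n+1)}_1$ is equivalent to $\frac{a_2}{a_1}<\nu\,\frac{U_{2n}}{U_{2n-1}}$, and negativity of $a^{(2n+1)}_2$ to $\frac{a_2}{a_1}<\nu\,\frac{U_{2n+1}}{U_{2n}}$; the analogous requirements from \eqref{even_an-++} and from the negative side give lower bounds of the form $\frac{a_2}{a_1}>\nu\,\frac{U_{m}}{U_{m+1}}$. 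By the monotonicity \eqref{monotone}, the ratios $\frac{U_{m+1}}{U_m}$ strictly decrease toward the limit $\tfrac12(\kappa+\sqrt{\kappa^2-4})$ from \eqref{limit} (with $t=\tfrac\kappa2$), so all the upper-bound constraints are implied by the single worst case $\frac{a_2}{a_1}\leq\frac\nu2(\kappa+\sqrt{\kappa^2-4})$, and all lower-bound constraints by $\frac{a_2}{a_1}\geq\frac{2\nu}{\kappa+\sqrt{\kappa^2-4}}$ — which is exactly the Case 1 interval. Thus I would isolate, once and for all, the chain of inequalities $\frac{U_{m}}{U_{m+1}}\;\nearrow\;\tfrac12(\kappa+\sqrt{\kappa^2-4})\;\nwarrow\;\frac{U_{m+1}}{U_m}$ from \eqref{monotone}, \eqref{wronskian}, \eqref{explicit}, and feed it into the induction so that each bracket resolves as claimed. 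Once all the inequalities in \eqref{even_an-++}--\eqref{odd_an-+-} are in place, \eqref{sign_reg} is immediate by comparing $\bigl(\sgn a_1^{(n)},\sgn a_2^{(n)}\bigr)$ with \eqref{stable}, and I would close by remarking that this also shows the sign pattern is independent of the precise values $a_1,a_2$ within the Case 1 region, so the notation $\sigma(a^{(0)}_1,a^{(0)}_2)=\sigma_{\mathrm{reg}}$ is unambiguous.
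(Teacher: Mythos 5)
Your proposal is correct and follows essentially the same route as the paper's proof: the paper likewise derives the two-sided inequality $\nu\,U_n(\tfrac\kappa2)/U_{n+1}(\tfrac\kappa2)<\tfrac{a_2}{a_1}<\nu\,U_{n+1}(\tfrac\kappa2)/U_n(\tfrac\kappa2)$ from the Case 1 hypothesis via \eqref{monotone} and \eqref{limit}, checks the base case, and runs the induction through the combined two-step recursion \eqref{aux3.1} with the three-term relation \eqref{cheb_rec}, using the right (resp.\ left) inequality to resolve the brackets on the positive (resp.\ negative) side. No gaps to report.
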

\begin{proof} Our assumption for $\frac{a_2}{a_1}$ together with properties \eqref{monotone} and \eqref{limit} of the Chebyshev polynomials ensure
that
\begin{equation}
\label{btw}
\nu \frac{U_{n}(\frac k 2 )} {U_{n+1}(\frac k 2)} < \frac{a_2}{a_1} < \nu \frac{U_{n+1}(\frac k 2)} {U_{n}(\frac k 2)}
\end{equation}
for any $n > 0$. Then the expressions \eqref{even_an-++} -- \eqref{odd_an-+-} imply the inequalities therein, therefore we obtain \eqref{sign_reg}. As in the proof of Proposition \ref{++}, we verify only one of the expressions \eqref{even_an-++} -- \eqref{odd_an-+-}, for example, \eqref{odd_an-++}. The rest can be treated similarly. First, note that $a_1^{(1)} = a_1=a_1  U_0(\frac \kappa 2) - a_2\nu^{-1} U_{-1}(\frac \kappa 2), a_2^{(1)} = - a_1 p + a_2 =
-a_1 \nu U_1(\frac \kappa 2) + a_2 U_0(\frac \kappa 2)$ satisfy \eqref{odd_an-++} for $n=0$ and that $a_1^{(1)} > 0 >  a_2^{(1)}$. Arguing by induction and using \eqref{aux3.1}, we obtain
\begin{align*}
a^{(2n+3)}_1 &= -  a_1 U_{2n}\left (\frac \kappa 2 \right ) + a_2 \nu^{-1}U_{2n-1}\left (\frac \kappa 2 \right )
+ \left (a_1 \nu U_{2n+1}\left (\frac \kappa 2 \right ) - a_2 U_{2n}\left (\frac \kappa 2 \right )\right ) q\\
\\
&\stackrel{\eqref{cheb_rec}}{=} a_1 U_{2n+2}\left (\frac \kappa 2 \right ) -  a_2 \nu^{-1} U_{2n+1}\left (\frac \kappa 2 \right ) > 0,
\end{align*}
\begin{align*}
a^{(2n+3)}_2 &= a_1 \nu U_{2n+1}\left (\frac \kappa 2 \right ) - a_2 U_{2n}\left (\frac \kappa 2 \right )
- \left (a_1 U_{2n+2}\left (\frac \kappa 2 \right ) -  a_2 \nu^{-1} U_{2n+1}\left (\frac \kappa 2 \right )\right ) p\\
\\
&\stackrel{\eqref{cheb_rec}}{=} -a_1 \nu U_{2n+3}\left (\frac \kappa 2 \right ) +  a_2 U_{2n+2}\left (\frac \kappa 2 \right ) < 0,
\end{align*}
and the claim follows. Note that here we used the right inequality in \eqref{btw}. It plays the same role in the proof of \eqref{even_an-++}, while for \eqref{even_an-+-}, \eqref{odd_an-+-} the left inequality in \eqref{btw} is needed.
\end{proof}

Next we consider Case 2.
\begin{prop}
\label{-++}
If there exists $N > 0$ such that 
\begin{equation}
\label{ineq-++} 
\nu \frac {U_{N+1}\left (\frac \kappa 2 \right )} {U_{N}\left (\frac \kappa 2 \right )}  \leq \frac {a_2} {a_1} < 
\nu \frac {U_{N}\left (\frac \kappa 2 \right )} {U_{N-1}\left (\frac \kappa 2 \right )},
\end{equation}
then equations \eqref{even_an-+-}, \eqref{odd_an-+-} remain valid for $n \geq 0$, and equations \eqref{even_an-++}, \eqref{odd_an-++} remain valid for
$a^{(k)}_1, a^{(k)}_2 (k=0, 1\ldots, N)$. Furthermore, $a^{(N+1)}_1$ and  $a^{(N+1)}_2$ are nonnegative and not both zero and
\begin{equation}
\label{sign-++}
\sigma (a^{(0)}_1, a^{(0)}_2) = 
\begin{cases} 
\left (\sigma_{++}^{(n-N-1)}\right)_{n\in \mathbb{Z}} & \mbox{for odd}\ N\\
\left (\tau(\sigma_{++}^{(n-N-1)})\right)_{n\in \mathbb{Z}} & \mbox{for even}\ N,
\end{cases}
\end{equation}
where $\tau$ is defined in \eqref{sigma_+-}.
%
\end{prop}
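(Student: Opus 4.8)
The plan is to split $\mathbb Z$ into the three ranges $n\le 0$, $0\le n\le N$, and $n\ge N+1$. On the first two ranges the computations behind Proposition \ref{-+} carry over almost word for word; at step $N+1$ the vector $\mathbf a^{(N+1)}$ becomes componentwise nonnegative, after which Proposition \ref{++} takes over.

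First I would isolate the two inequalities that Case 2 provides. Since $\frac{a_2}{a_1}>\frac\nu2\bigl(\kappa+\sqrt{\kappa^2-4}\bigr)$ and, by \eqref{monotone} and \eqref{limit}, the ratios $\nu U_n(\tfrac\kappa2)/U_{n+1}(\tfrac\kappa2)$ increase in $n\ge 0$ with limit $\frac\nu2\bigl(\kappa-\sqrt{\kappa^2-4}\bigr)$, the left-hand inequality of \eqref{btw}, $\nu U_n/U_{n+1}<\frac{a_2}{a_1}$, holds for \emph{every} $n\ge 0$; while \eqref{ineq-++} together with \eqref{monotone} shows that the right-hand inequality $\frac{a_2}{a_1}<\nu U_{n+1}/U_n$ holds exactly for $n=0,1,\dots,N-1$. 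Examining the proof of Proposition \ref{-+}, one sees that \eqref{even_an-+-} and \eqref{odd_an-+-} are derived using only the left-hand inequality of \eqref{btw}, so those identities, and the signs they encode, remain valid here for all $n\ge 0$; and \eqref{even_an-++}, \eqref{odd_an-++} are derived using the right-hand inequality of \eqref{btw} only at indices strictly below the step being established, so that induction runs verbatim up to step $N$, yielding \eqref{even_an-++}, \eqref{odd_an-++} for $\mathbf a^{(k)}$ with $k=0,1,\dots,N$. At $k=N$ the binding hypothesis is precisely the strict right-hand inequality in \eqref{ineq-++}, so the signs there are strict and $\mathbf a^{(N)}$ is a nonzero vector of sign $\bigl((-)^{N-1},(-)^{N}\bigr)$.

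Next I would compute $\mathbf a^{(N+1)}$ from $\mathbf a^{(N)}$ by one further mutation — $\mu_1$ if $N$ is even, $\mu_2$ if $N$ is odd. Feeding the step-$N$ formulas from \eqref{even_an-++}/\eqref{odd_an-++} into \eqref{odd_a+}, respectively \eqref{even_a+}, using $\nu\kappa=p$ and $\nu^{-1}\kappa=q$, and collapsing the result with the recursion \eqref{cheb_rec}, one obtains for $N$ even
\[
a^{(N+1)}_1=a_1U_{N}\!\left(\tfrac\kappa2\right)-a_2\nu^{-1}U_{N-1}\!\left(\tfrac\kappa2\right),\qquad
a^{(N+1)}_2=-a_1\nu U_{N+1}\!\left(\tfrac\kappa2\right)+a_2U_{N}\!\left(\tfrac\kappa2\right);
\]
here $a^{(N+1)}_1>0$ automatically (as $a^{(N)}_1<0$), and $a^{(N+1)}_2\ge 0$ holds precisely by the left-hand inequality in \eqref{ineq-++}. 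For $N$ odd the analogous computation gives $a^{(N+1)}_2=a_1\nu U_{N}(\tfrac\kappa2)-a_2U_{N-1}(\tfrac\kappa2)>0$ and $a^{(N+1)}_1=-a_1U_{N+1}(\tfrac\kappa2)+a_2\nu^{-1}U_{N}(\tfrac\kappa2)\ge 0$, again by the left-hand inequality in \eqref{ineq-++}. In either case $\mathbf a^{(N+1)}$ is nonnegative and not both zero, which is one of the assertions.

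Finally, for $n\ge N+1$ I would read off from \eqref{3by2n} that $\hat B^{(N+1)}$ is again of the initial shape \eqref{3by2init} when $N$ is odd, and of that shape with the two vertices relabelled — equivalently with $p,q$ exchanged and the last row transposed — when $N$ is even. When $N$ is odd, the tail of the mutation sequence from step $N+1$ on is literally a sequence $\mu_1,\mu_2,\mu_1,\dots$ applied to a matrix of the form \eqref{3by2init} with nonnegative, not-both-zero last row; by Proposition \ref{++} its sign pattern is $\sigma_{++}$ irrespective of the precise values of $a^{(N+1)}_1,a^{(N+1)}_2$, and since mutations are involutive the double-infinite pattern read from this new base point gives $\sigma^{(n)}=\sigma_{++}^{(n-N-1)}$ for all $n\in\mathbb Z$. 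When $N$ is even, relabelling the two vertices turns the tail into the previous situation — it interchanges $\mu_1\leftrightarrow\mu_2$, exchanges $p\leftrightarrow q$ (harmless, since $\sigma_{++}$ is insensitive to the order of $p,q$), and transposes the last row — so $\sigma^{(n)}=\tau\bigl(\sigma_{++}^{(n-N-1)}\bigr)$, which is \eqref{sign-++}. The bulk of the work is the middle step: substituting the Chebyshev expressions for $\mathbf a^{(N)}$ into the mutation rule with the correct parity and verifying, after the \eqref{cheb_rec} telescoping, that nonnegativity of $\mathbf a^{(N+1)}$ is controlled by exactly the left endpoint of the interval in \eqref{ineq-++}; the conceptual point, once this is in hand, is that from step $N+1$ on the process has a nonnegative initial datum, so Proposition \ref{++} applies up to a parity-determined index shift and a possible transposition $\tau$ forced by \eqref{3by2n}.
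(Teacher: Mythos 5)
Your proposal is correct and follows essentially the same route as the paper: track which half of \eqref{btw} survives under \eqref{ineq-++} (the left one for all $n$, the right one only for $n<N$), conclude that the Chebyshev formulas persist through step $N+1$ where the left endpoint of \eqref{ineq-++} forces $\mathbf a^{(N+1)}$ to become componentwise nonnegative, and then hand off to Proposition \ref{++} with the parity-dependent shift and transposition $\tau$. You merely fill in details the paper leaves implicit (the explicit computation of $\mathbf a^{(N+1)}$ and the vertex-relabelling argument for even $N$), and these check out.
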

\begin{proof} In the proof of Proposition \ref{-+}, our argument relied on the inequalities \eqref{btw}. Under the current assumptions, the left inequality remains valid for all $n$, while the right inequality is valid for $n<N$. This explains the claim about the validity of \eqref{even_an-++} -- \eqref{odd_an-+-} in this situation. 

Next, if $N$ is odd, $N=2 m + 1$, then \eqref{even_an-++}, \eqref{odd_an-++} are valid for $n=m$ with $a_1^{(2m)} < 0 < a_2^{(2m)}$ and $a_1^{(2m+1)} >  0 > a_2^{(2m+1)}$.
This means that the expressions in \eqref{even_an-++} are also valid for $n=m+1$, however, due to \eqref{ineq-++},  $a_1^{(2m+2)}\geq 0$ and  $a_2^{(2m+2)} > 0$. This puts us in the situation covered by Proposition \ref{++} and the first line in \eqref{sign-++} follows. The case of $N$ even is treated in the same way.
\end{proof}

Finally, we consider Case 3.
\begin{prop}
\label{--+}
If there exists $N > 0$ such that 
$$ \nu \frac {U_{N-1}\left (\frac \kappa 2 \right )} {U_{N}\left (\frac \kappa 2 \right )}  \leq \frac {a_2} {a_1} < 
\nu \frac {U_{N}\left (\frac \kappa 2 \right )} {U_{N+1}\left (\frac \kappa 2 \right )},
$$ 
then equations \eqref{even_an-++}, \eqref{odd_an-++} remain valid for $n \geq 0$, and equations \eqref{even_an-+-}, \eqref{odd_an-+-} remain valid for
$a^{(k)}_1, a^{(k)}_2 (k=0, -1\ldots, -N)$. Furthermore, $a^{(-N-1)}_1$ and  $a^{(-N-1)}_2$ are nonpositive and not both zero, and 
\[
\sigma (a^{(0)}_1, a^{(0)}_2) = 
\begin{cases} 
\left (\sigma_{++}^{(n+N+3)}\right)_{n\in \mathbb{Z}} & \mbox{for odd}\ N\\
\left (\tau(\sigma_{++}^{(n+N+3)})\right)_{n\in \mathbb{Z}} & \mbox{for even}\ N,
\end{cases}
\]
where $\tau$ is defined in \eqref{sigma_+-}.

\end{prop}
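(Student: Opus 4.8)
The plan is to mirror exactly the structure of the proof of Proposition~\ref{-++}, which handled Case~2, but now running the recursion \emph{backwards} (toward negative indices) rather than forwards. The key observation is that Case~3 is the ``time-reversed'' analogue of Case~2: the condition $\frac{a_2}{a_1} < \frac{2\nu}{\kappa+\sqrt{\kappa^2-4}}$, rewritten using \eqref{monotone} and \eqref{limit}, is equivalent to the existence of $N>0$ with $\nu\,\frac{U_{N-1}(\kappa/2)}{U_N(\kappa/2)} \le \frac{a_2}{a_1} < \nu\,\frac{U_N(\kappa/2)}{U_{N+1}(\kappa/2)}$, since the ratios $U_{N-1}/U_N$ decrease monotonically to $\bigl(\kappa/2+\sqrt{\kappa^2/4-1}\bigr)^{-1} = \frac{2}{\kappa+\sqrt{\kappa^2-4}}$ as $N\to\infty$ and tend to $1$ as $N\to 0^+$. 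This is the reciprocal of the interval appearing in \eqref{ineq-++}, which is precisely why running backwards is the right move.

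The concrete steps are as follows. First, I would recast the inequality hypothesis in the form just described, observing that the condition ``$\frac{a_2}{a_1}$ lies in Case~3'' partitions into the stated family of intervals indexed by $N$. Second, I would revisit which of the two inequalities in \eqref{btw} is used where: in the proof of Proposition~\ref{-+} the \emph{left} inequality $\nu\,\frac{U_n(\kappa/2)}{U_{n+1}(\kappa/2)} < \frac{a_2}{a_1}$ drives the forward recursion \eqref{even_an-++},\eqref{odd_an-++} and the \emph{right} inequality drives the backward recursion \eqref{even_an-+-},\eqref{odd_an-+-}. Under the Case~3 hypothesis, the right inequality $\frac{a_2}{a_1} < \nu\,\frac{U_{n+1}(\kappa/2)}{U_n(\kappa/2)}$ holds for all $n>0$ (since $\frac{a_2}{a_1} < \nu\,\frac{U_N}{U_{N+1}} \le \nu$ and the ratios $U_{n+1}/U_n$ increase with $n$), whereas the left inequality $\nu\,\frac{U_{n}(\kappa/2)}{U_{n+1}(\kappa/2)} < \frac{a_2}{a_1}$ fails once $n \ge N$. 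Consequently \eqref{even_an-++},\eqref{odd_an-++} remain valid for all $n\ge 0$, while \eqref{even_an-+-},\eqref{odd_an-+-} are valid only for the truncated range $k = 0, -1, \dots, -N$ — exactly as asserted. Third, I would examine the ``exit'' step: when $N=2m+1$ is odd, \eqref{even_an-+-},\eqref{odd_an-+-} hold at the edge of the range, giving $a_1^{(-2m-1)} > 0 > a_2^{(-2m-1)}$, and then the expressions for $a^{(-2m-2)}_1, a^{(-2m-2)}_2$ — while still computed by the same Chebyshev formulas — turn out (using the failure of the left inequality in \eqref{btw} at $n=N$) to satisfy $a^{(-N-1)}_1 \le 0$ and $a^{(-N-1)}_2 < 0$, i.e.\ both are nonpositive and not both zero. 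Fourth, at that point we are exactly in the regime covered by the $\sigma_{--}$ analysis derived from Proposition~\ref{++} (recall $\sigma_{--} = (\sigma_{++}^{(n+2)})_{n\in\mathbb Z}$): the seed $\hat B^{(-N-1)}$ has last row with both entries nonpositive, so its sign pattern, reindexed, equals a shift of $\sigma_{++}$. Tracking the index shift carefully — the parity of $N$ determines whether the two coordinates have been swapped by a $\tau$, and the offset $+N+3$ accounts for both the $-N-1$ shift to reach the seed and the $+2$ built into $\sigma_{--}$ — yields the displayed formula. The even-$N$ case is identical up to applying $\tau$, just as in Proposition~\ref{-++}.

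The main obstacle I anticipate is purely bookkeeping rather than conceptual: getting the index shift in the final formula exactly right. One has to be careful that the ``double-infinite sequence'' convention means a mutation $\mu_2,\mu_1,\mu_2,\dots$ applied to $\hat B^{(-N-1)}$ must be matched against the original indexing of $\sigma_{++}$, and that the $\tau$ (coordinate swap) interacts with parity in the same way it did in \eqref{sigma_+-} and \eqref{sign-++}. A clean way to avoid sign errors is to verify the claim directly for the smallest case $N=1$ (check $a^{(-1)}_1, a^{(-1)}_2$ and $a^{(-2)}_1, a^{(-2)}_2$ against the formulas, confirm $a^{(-2)}$ has both entries $\le 0$, and then read off $\sigma(a^{(0)}_1,a^{(0)}_2)$ directly from Proposition~\ref{++} applied to $-\hat B^{(-2)}$), and then let the induction on $N$ — which is structurally the same induction already carried out in Propositions~\ref{++} and \ref{-+} via \eqref{aux3.1} and \eqref{cheb_rec} — do the rest. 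Beyond that, the Chebyshev recursion manipulations are word-for-word the same as in the proofs above and need not be repeated in detail.
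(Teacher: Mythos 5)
Your proposal is correct and takes essentially the same route as the paper, whose entire proof of this proposition is the single sentence that it is ``completely analogous'' to Proposition~\ref{-++}; your fleshed-out version of that analogy (right inequality of \eqref{btw} valid for all $n$, left inequality failing at $n=N$, exit into the $\sigma_{--}$ regime with offset $N+1+2=N+3$) is exactly what is intended. One small internal slip: the sentence claiming that in Proposition~\ref{-+} the \emph{left} inequality of \eqref{btw} drives \eqref{even_an-++}, \eqref{odd_an-++} and the \emph{right} drives \eqref{even_an-+-}, \eqref{odd_an-+-} has the roles reversed (the paper states the opposite), but your subsequent deductions use the correct assignment, so the argument goes through.
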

\begin{proof} The proof is completely analogous to that of a previous proposition.
\end{proof}

Combining Propositions \ref{++}--\ref{--+}, we arrive at the following conclusion.

\begin{thm}
\label{rank2}
Conjecture \ref{main} is valid in the rank $2$ case.
\end{thm}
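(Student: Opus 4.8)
The plan is to assemble Theorem~\ref{rank2} directly from the case analysis carried out in Propositions~\ref{++}--\ref{--+}. Recall that by Remark~\ref{conj_rem}(1) it suffices to treat a single frozen row, i.e.\ the setting of \eqref{3by2init}, and that monotonicity in rank $2$ forces the mutation sequence to be the alternating one, so the double-infinite family $\bigl(\hat B^{(n)}\bigr)_{n\in\mathbb Z}$ of \eqref{3by2n} already captures both monotone sequences; balancedness is automatic here since each of $\mu_1,\mu_2$ occurs with asymptotic frequency $\tfrac12$. Thus the theorem reduces to the single assertion: for every nonzero $(a^{(0)}_1,a^{(0)}_2)\in\mathbb Z^2$, the sign pattern $\sigma(a^{(0)}_1,a^{(0)}_2)$ of \eqref{sign_pattern} agrees with $\sigma_{\mathrm{reg}}$ of \eqref{stable} for all but finitely many indices $n\in\mathbb Z$ (equivalently, for all $n$ with $|n|$ large, in both directions).

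First I would dispose of the easy quadrants. If $a^{(0)}_1,a^{(0)}_2\geq0$ (not both zero), Proposition~\ref{++}, specifically \eqref{sign++}, gives $\sigma^{(n)}=\bigl((-)^{n-1},(-)^n\bigr)=\sigma_{\mathrm{reg}}^{(n)}$ for all $n\notin\{-1,0,1,2,3\}$, so only finitely many components deviate. If $a^{(0)}_1,a^{(0)}_2\leq0$, then $\sigma(a^{(0)}_1,a^{(0)}_2)=\bigl(\sigma_{++}^{(n+2)}\bigr)_{n\in\mathbb Z}$, a shift of a pattern that equals $\sigma_{\mathrm{reg}}$ off a finite set, hence itself equals $\sigma_{\mathrm{reg}}$ off a finite set (note $\sigma_{\mathrm{reg}}$ is itself shift-(anti)periodic: $\sigma_{\mathrm{reg}}^{(n+2)}=\sigma_{\mathrm{reg}}^{(n)}$, so shifting $\sigma_{++}$ by an even amount and comparing to $\sigma_{\mathrm{reg}}$ is the same as comparing $\sigma_{++}$ to $\sigma_{\mathrm{reg}}$). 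The mixed case $a^{(0)}_1>0>a^{(0)}_2$ is handled by \eqref{sigma_+-}: $\sigma_{+-}=\bigl(\tau(\sigma_{++}^{(n+1)})\bigr)_{n\in\mathbb Z}$; one checks that applying $\tau$ and shifting by one again carries $\sigma_{\mathrm{reg}}$ to $\sigma_{\mathrm{reg}}$ (since $\tau\bigl((-)^{n-1},(-)^n\bigr)=\bigl((-)^n,(-)^{n-1}\bigr)=\bigl((-)^{(n+1)-1},(-)^{n+1}\bigr)=\sigma_{\mathrm{reg}}^{(n+1)}$), so again only finitely many components of $\sigma_{+-}$ differ from $\sigma_{\mathrm{reg}}$.

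The substantive case is $a^{(0)}_1<0<a^{(0)}_2$, where everything hinges on the ratio $a_2/a_1$ relative to the limiting value $\tfrac\nu2\bigl(\kappa+\sqrt{\kappa^2-4}\bigr)=\lim_n \nu\,U_n/U_{n-1}$ and its reciprocal; this is exactly the trichotomy Cases~1--3 preceding Proposition~\ref{-+}. In Case~1, Proposition~\ref{-+} gives $\sigma(a^{(0)}_1,a^{(0)}_2)=\sigma_{\mathrm{reg}}$ identically. In Case~2 (ratio too large), by \eqref{monotone} and \eqref{limit} there is a unique $N>0$ with \eqref{ineq-++}, and Proposition~\ref{-++} shows the pattern is a parity-dependent shift (possibly composed with $\tau$) of $\sigma_{++}$, hence differs from $\sigma_{\mathrm{reg}}$ in only finitely many places by the same shift-invariance remarks as above; Case~3 is symmetric via Proposition~\ref{--+}. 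Stringing these together exhausts all four sign-combinations of $(a^{(0)}_1,a^{(0)}_2)$, and in each the deviation set is finite, which is precisely Conjecture~\ref{main} in rank $2$. The only point needing a little care—and the one I would write out explicitly—is the elementary bookkeeping that a shift by an even integer, or by an odd integer followed by $\tau$, is an automorphism of the pattern $\sigma_{\mathrm{reg}}$, together with the existence-and-uniqueness of the threshold index $N$ in Cases~2 and~3, which follows from the strict monotonicity \eqref{monotone} of $U_n/U_{n-1}$ and its limit \eqref{limit}; everything else is a direct citation of the preceding propositions.
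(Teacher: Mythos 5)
Your proposal is correct and follows exactly the paper's route: the theorem is obtained by assembling Propositions~\ref{++}--\ref{--+}, observing that each resulting sign pattern ($\sigma_{++}$, $\sigma_{--}$, $\sigma_{+-}$, and the shifted/$\tau$-twisted patterns of Cases 2 and 3) differs from $\sigma_{\mathrm{reg}}$ in only finitely many components. The paper's own proof is a one-sentence summary of this same case assembly; your version merely spells out the (correct) bookkeeping that even shifts, and odd shifts composed with $\tau$, preserve $\sigma_{\mathrm{reg}}$.
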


\begin{proof} One can see that each of the sign patterns $\sigma_{++}, \sigma_{--}, \sigma_{+-}$,  as well as every sign pattern that appears in Propositions \ref{-++}, \ref{--+}, differs from  $\sigma_{\mathrm{reg}}$ in \eqref{stable} for exactly three consecutive components.
\end{proof}

\section{Rank $3$ example: the Markov quiver}

To present additional evidence in support of Conjecture \ref{main}, we consider the rank $3$ case with $B$ being the adjacency  matrix of the celebrated {\em Markov quiver}.  The corresponding cluster algebra served as a test case for several important phenomena in the theory of cluster algebras. In particular, the principal $c$-vectors and $g$-vectors associated with the Markov quiver were described in \cite{NC}.

We add  an additional frozen vertex to the Markov quiver and investigate possible sign patterns, for example, for a monotone and balanced sequence 
\[
\mu = \left (1,2,3,1,2,3,\ldots \right ).
\] 
The figure below illustrates the case when the initial vector ${\mathbf a}^{(0)}=(a_1,a_2,a_3)$ is componentwise nonnegative. In this figure, we assume that there are $a_i$ arrows pointing from the frozen vertex to the vertex $i$.

\begin{center}
\begin{tikzpicture}
\tikzset{vertex/.style = {shape=circle,draw,minimum size=1.5em}}
\tikzset{edge/.style = {->,> = latex'}}
\node[vertex] (1) at  (0,0) {$1$};
\node[vertex] (2) at  (2,3) {$2$};
\node[vertex] (3) at  (4,0) {$3$};
\node[regular polygon,regular polygon sides=4,draw]  (4) at  (2,1.3) {};

\node[rectangle] (5) at (1.25,1) {$a_1$};
\node[rectangle] (6) at (1.75,1.8) {$a_2$};
\node[rectangle] (7) at (2.75,1) {$a_3$};

\draw[edge] (1.35) to (2.235);
\draw[edge] (1.55) to (2.215);
\draw[edge] (3.170) to (1.10) ;
\draw[edge] (3.190) to (1.350) ;
\draw[edge] (2.305) to (3.145);
\draw[edge] (2.325) to (3.125);
\draw[edge] (4) to (2);

\draw[edge] (4) to (3.160);
\draw[edge] (4) to (1.20);
\end{tikzpicture}
\end{center}

Since the sequence $\mu$ consists in a consecutive mutations applied to the quiver in a clockwise order starting with the vertex 1 and since, as is well-known, the Markov quiver transforms into its opposite after a mutation at any vertex, we can replace $\mu$ with iterations of {\em the same} transformation $\rho$ that consists of a quiver mutation at the vertex 1, followed by rotation of the quiver counter clockwise by 120 degrees (equivalently, applying a cyclic permutation $\tau=(132)$ to the mutable vertices) and then reversing all arrows. Using \eqref{eq:Bmut1}, we compute the result of $\rho$ acting on the initial nonzero vector ${\mathbf a}^{(0)}=(a^{(0)}_1,a^{(0)}_2,a^{(0)}_3)\in \mathbb{Z}^3$:
\begin{equation}
\label{rho}
\rho ({\mathbf a}^{(0)}) =\left (\rho({\mathbf a}^{(0)})_1, \rho({\mathbf a}^{(0)})_2, \rho({\mathbf a}^{(0)})_3\right )= \left (-2[a^{(0)}_1]_+ - a^{(0)}_2, 2[-a^{(0)}_1]_+ - a^{(0)}_3, a^{(0)}_1\right ).
\end{equation}

For a (not necessarily strict) sign vector $\tilde\omega=(\tilde\omega_1,\tilde\omega_2,\tilde\omega_3)$ and a strict sign vector $\omega=(\omega_1,\omega_1,\omega_3)$,
we write $\tilde\omega\approx\omega$ if 
$(\tilde\omega_i,\omega_i)\neq (+,-), (-,+)\  \text{for any}\  i$.

Observe that if ${\omega}^{(0)} =(- , +, -)$ then $\omega^{(n)}=\omega^{(0)}$ and so $\omega^{(n)}$ is stable. More generally, if ${\omega}^{(0)}\approx (- , +, -)$, i.e., ${\mathbf a}^{(0)}=\left (-a_1, a_2, -a_3\right)$, where $a_1, a_2, a_3$ are nonnegative integers and not all zero, then $\omega^{(n)}\approx (- , +, -)$ as well and, moreover,  $\omega^{(n)}=(- , +, -)$ for $n\geq 5$. This follows from a straightforward computation that gives
\[
\rho^5 ({\mathbf a}^{(0)}) = \left ( -4a_1 - 4a_2 - a_3, 9a_1 + 4a_2 + 4a_3, -4a_1 - a_2 - 2a_3 \right ).
\]
Thus, to establish Conjecture \ref{main} for $\mu = \left (1,2,3,1,2,3,\ldots \right )$, it suffices to show that $\omega^{(n)}=\omega^{(n)}({\mathbf a}^{(0)}) $  eventually stabilizes  at $(-, +, -)$ for any ${\mathbf a}^{(0)}$.

Denote by $\omega^{(n)}=\omega^{(n)}({\mathbf a}^{(0)}) $ the sign vector of $\rho^n({\mathbf a}^{(0)}) $ defined similarly to \eqref{sign_vect}.
For the rest of this section, we fix $a_1, a_2, a_3$ to be nonnegative integers and not all zero. 
In what follows, the choice of ${\omega}^{(0)}\approx (\epsilon_1 , \epsilon_2, \epsilon_3)$ signifies that ${\mathbf a}^{(0)} =\left (\epsilon_1 a_1, \epsilon_2 a_2, \epsilon_3 a_3\right )$.

Let us first consider the case when  ${\omega}^{(0)}\approx (+ , +, -)$, which leads to 
\begin{align*}
&\rho\left({\mathbf a}^{(0)}\right )= \left (-2a_1 -a_2, a_3, a_1 \right ), &{\omega}^{(1)}\approx (-, +, +),\\
&\rho^2\left({\mathbf a}^{(0)}\right )= \left (- a_3, 3a_1 + 2a_2, -2a_1 - a_2 \right ), &{\omega}^{(2)}\approx(-, +, -),
\end{align*} 
after which the desired sign stabilization occurs. 
Similarly, starting with ${\omega}^{(0)}\approx (- , -, -)$, we obtain
\begin{align*}
&\rho\left({\mathbf a}^{(0)}\right )= \left (a_2, 2a_1 + a_3, - a_1 \right ), &{\omega}^{(1)}\approx(+, +, -),
\end{align*}
which reduces to the case we just considered, as does the
situation depicted in the figure above that corresponds to ${\omega}^{(0)}\approx (+ , +, +)$, which results in
\begin{align*}
&\rho\left({\mathbf a}^{(0)}\right )= \left (-2a_1 - a_2, -a_3, a_1 \right ), &{\omega}^{(1)}\approx (-, -, +),\\
&\rho^2\left({\mathbf a}^{(0)}\right )= \left (a_3, 3a_1 + 2a_2, -2a_1 - a_2 \right ), &{\omega}^{(2)}\approx (+, +, -).
\end{align*} 

For ${\omega}^{(0)}\approx (- , +, +)$, we get $\rho\left({\mathbf a}^{(0)}\right )= \left (- a_2, 2 a_1-a_3, -a_1 \right )$ and so ${\omega}^{(1)}\approx (- , +, -)$ or $(- , -, -)$, depending on the sign of $2 a_1-a_3$. Both of these cases were already covered above.

We are left with three remaining choices for ${\omega}^{(0)}$ : $(+ , -, -/0), (-/0 , -, +)$ and $(+ , -, +)$. The first of these, depending on the sign of $-2a_1 + a_2$, results in ${\omega}^{(1)}=(+ , +, +)$ or in ${\omega}^{(1)}=(-/0, +, +)$ --- both situations have been already treated above. 

The case ${\omega}^{(0)}= (-/0 , -, +)$, depending on the sign of $2a_1 -a_3$, leads to ${\omega}^{(1)}=(+ , +, -)$ or in ${\omega}^{(1)}=(+ , -/0, -)$, also covered by now.

The last case requires a more delicate analysis. Indeed, if ${\mathbf a}^{(0)}=(a_1,-a_2,a_3)$ and if $a_2 > 2 a_1$, then the sign pattern
for $ \rho\left({\mathbf a}^{(0)}\right )= \left (-2a_1 + a_2, -a_3, a_1 \right )$ is also $(+ , -, +)$. Persistence of such situation would contradict our claim that ${\omega}^{(n)}$ stabilizes at $(-, +, -)$.

\begin{lem}
\label{fib}
Suppose $a_1, a_2, a_3$ are positive numbers such that for ${\mathbf a}^{(0)}=(a_1,-a_2,a_3)$, ${\omega}^{(n)}= (+, -, +)$ for all $n$.
Then the first component  of $ \rho^n\left({\mathbf a}^{(0)}\right )$ is 
\begin{equation}
\label{fib_rec}
\rho^n\left({\mathbf a}^{(0)}\right )_1 = (-1)^n \left (   (f_{n+3} -1) a_1 -  (f_{n+2} -1) a_2 +    (f_{n+1} -1)  a_3     \right ),
\end{equation}
where $f_n$ denotes the $n$th Fibonacci's number.
\end{lem}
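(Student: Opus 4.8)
The plan is to establish \eqref{fib_rec} by induction on $n$, using the explicit action of $\rho$ together with the hypothesis that the sign pattern never leaves $(+,-,+)$. The point is that under this standing assumption the piecewise-linear map \eqref{rho} becomes an honest linear map on the relevant orbit: when ${\mathbf a}^{(0)}=(b_1,b_2,b_3)$ has $b_1>0$, $b_2<0$, $b_3>0$, the bracket terms in \eqref{rho} simplify to $[b_1]_+ = b_1$ and $[-b_1]_+=0$, so $\rho(b_1,b_2,b_3) = (-2b_1-b_2,\,-b_3,\,b_1)$. Thus, as long as $\omega^{(k)}=(+,-,+)$ for $k=0,\dots,n-1$, we may apply this linear formula $n$ times in succession.

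First I would record the linearized recursion for the components of ${\mathbf a}^{(n)} := \rho^n({\mathbf a}^{(0)})$: writing ${\mathbf a}^{(n)}=(a^{(n)}_1,a^{(n)}_2,a^{(n)}_3)$ we get $a^{(n+1)}_1 = -2a^{(n)}_1 - a^{(n)}_2$, $a^{(n+1)}_2 = -a^{(n)}_3$, $a^{(n+1)}_3 = a^{(n)}_1$. From the last two, $a^{(n)}_2 = -a^{(n-1)}_3 = -a^{(n-2)}_1$ and $a^{(n)}_3 = a^{(n-1)}_1$, so everything reduces to the scalar sequence $u_n := (-1)^n a^{(n)}_1$, which satisfies $u_{n+1} = 2u_n - u_{n-2}$ after substituting and tracking the sign $(-1)^n$; one checks the characteristic polynomial $x^3 - 2x^2 + 1 = (x-1)(x^2-x-1)$, whose roots are $1$ and the two roots of the Fibonacci recursion, which explains the shape of \eqref{fib_rec}. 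Then I would verify the claimed closed form directly: set $g_n := (f_{n+3}-1)a_1 - (f_{n+2}-1)a_2 + (f_{n+1}-1)a_3$ and check (i) the base cases $n=0,1$ against ${\mathbf a}^{(0)}$ and $\rho({\mathbf a}^{(0)})=(-2a_1+a_2,-a_3,a_1)$ — here note $a^{(0)}_2 = -a_2$ so $g_0 = (f_3-1)a_1 - (f_2-1)(-a_2)\cdot(-1)\ldots$ must be matched carefully with the sign conventions, using $f_1=f_2=1$, $f_3=2$, $f_4=3$ — and (ii) that $g_n$ satisfies $g_{n+1} = 2g_n - g_{n-2}$, which is immediate since each of $f_{n+3}, f_{n+2}, f_{n+1}$ satisfies the Fibonacci recursion and the constant $-1$ is killed by $2\cdot 1 - 1 \neq 1$ — wait, here one must be slightly careful, since $2(f_{m}-1) - (f_{m-3}-1) = 2f_m - f_{m-3} - 1$; using $f_m = f_{m-1}+f_{m-2}$ twice one verifies $2f_m - f_{m-3} = f_{m+1}$, so indeed $2(f_m-1)-(f_{m-3}-1) = f_{m+1}-1$, as needed. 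Combining the base cases with the shared recursion gives \eqref{fib_rec} for all $n$.

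The main obstacle, and the only genuinely delicate point, is bookkeeping the sign conventions: the hypothesis ${\mathbf a}^{(0)}=(a_1,-a_2,a_3)$ means the second coordinate is negative, so one must consistently distinguish the positive parameters $a_1,a_2,a_3$ from the signed coordinates $a^{(0)}_i$, and the alternating factor $(-1)^n$ in \eqref{fib_rec} comes precisely from absorbing the sign flips $a^{(n+1)}_2=-a^{(n)}_3$, $a^{(n+1)}_1 = -2a^{(n)}_1 - a^{(n)}_2$ into a single positively-stated formula. Apart from that, the argument is a routine linear recurrence computation; the role of the standing hypothesis $\omega^{(n)}=(+,-,+)$ for all $n$ is simply to guarantee that the linear formula for $\rho$ is the one in force at every step, so that the induction never breaks.
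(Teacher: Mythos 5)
Your approach is essentially the paper's: under the standing hypothesis $\omega^{(n)}=(+,-,+)$ the map $\rho$ linearizes to $(b_1,b_2,b_3)\mapsto(-2b_1-b_2,-b_3,b_1)$, the first component then satisfies the third-order recursion $a^{(n+2)}_1=-2a^{(n+1)}_1+a^{(n-1)}_1$, and the closed form is verified by induction; the remark about the characteristic polynomial $(x-1)(x^2-x-1)$ is a nice extra explanation of why Fibonacci-plus-constant appears, though it is not needed. However, two bookkeeping slips must be repaired before the induction closes. First, the Fibonacci identity you invoke, $2f_m-f_{m-3}=f_{m+1}$, is false (take $m=5$: $2\cdot 5-1=9\neq 8=f_6$). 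What the recursion $g_{n+1}=2g_n-g_{n-2}$ actually requires is $2f_m-f_{m-2}=f_{m+1}$ (the paper's relation $2f_{n+2}-f_n=f_{n+3}$): in $g_n=(f_{n+3}-1)a_1-(f_{n+2}-1)a_2+(f_{n+1}-1)a_3$ the index attached to a fixed $a_i$ shifts by $1$ per step of $n$, so passing from $g_n$ to $g_{n-2}$ lowers each Fibonacci index by $2$, not $3$. The identity $2f_m-f_{m-2}=f_{m+1}$ is immediate from $f_{m+1}=f_m+f_{m-1}$ and $f_{m-1}=f_m-f_{m-2}$, and it also carries the constant correctly since $2(-1)-(-1)=-1$.

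Second, because the recursion reaches back to $g_{n-2}$ (and because the substitution $a^{(n)}_2=-a^{(n-2)}_1$ is only available for $n\geq 2$), you need three consecutive base cases $n=0,1,2$ to start the induction, not just $n=0,1$; this is why the paper checks $\rho^2({\mathbf a}^{(0)})_1=4a_1-2a_2+a_3$ explicitly in addition to $a_1$ and $-2a_1+a_2$. Both fixes are routine, and with them your argument coincides with the paper's proof.
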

\begin{proof} The claim \eqref{fib_rec} is checked directly for $n=0,1,2$, where 
\[
\left({\mathbf a}^{(0)}\right )_1=a_1,  \rho\left({\mathbf a}^{(0)}\right )_1=-2a_1 + a_2, \rho^2\left({\mathbf a}^{(0)}\right )_1= 4a_1 -  2a_2 + a_3
\]
and $f_1=f_2=1, f_3=2, f_4=3, f_5=5$.

Under the assumptions of the lemma, \eqref{rho} implies 
\[
\rho^{n+1}\left({\mathbf a}^{(0)}\right )_2 = - \rho^{n}\left({\mathbf a}^{(0)}\right )_3  = - \rho^{n-1}\left({\mathbf a}^{(0)}\right )_1\ 
\]
and then
\[
\rho^{n+2}\left({\mathbf a}^{(0)}\right )_1 = - 2 \rho^{n+1}\left({\mathbf a}^{(0)}\right )_1 + \rho^{n-1}\left({\mathbf a}^{(0)}\right )_1.
\]
The claim follows by induction from a relation $2 f_{n+2} - f_{n} = f_{n+3}$ which is an easy consequence of the Fibonacci recursion.
\end{proof}
As a corollary of Lemma \ref{fib}, we conclude that in order for a sign pattern $(+, -, +)$ to persists, the constants $a_1, a_2, a_3$ must satisfy inequalities
\begin{equation}
\label{markov_ineq}
(f_{2n+1} -1) a_1 -  (f_{2n} -1) a_2 +    (f_{2n-1} -1)  a_3   > 0 >  (f_{2n+2} -1) a_1 -  (f_{2n+1} -1) a_2 +    (f_{2n} -1)  a_3 
\end{equation}
for all $n>0$. Since $\lim_{n\to \infty}\frac{f_{n+1}}{f_n}= \varphi$, where $\varphi=\frac{1+\sqrt{5}}{2}$ is the {\em golden ratio}, \eqref{markov_ineq} implies that 
$a_1 \varphi^2 - a_2 \varphi + a_3 = \frac 1 2 \left ( (3a_1 - a_2 + 2 a_3) + (a_1 - a_2)\sqrt{5}\right )=0$. This is where the integrality of $a_1, a_2, a_3$ comes into play, since for the last equation to hold, we must have $a_1 = a_2 = -a_3$, which contradicts our positivity assumption for $a_1, a_2, a_3$. The conclusion is that if ${\omega}^{(0)}=(+ , -, +)$ then there exists such $n$ that ${\omega}^{(n)}=(-/0 , -, +)$. This concludes the proof of Conjecture \ref{main} for the Markov quiver and $\mu = \left (1,2,3,1,2,3,\ldots \right )$.

\begin{rem}
\label{rem_Markov}
The condition of a sequence of mutations $\mu$ being at least weakly balanced is necessary for Conjecture \ref{main} to hold true in the case of the Markov quiver. Indeed, let $\mu= (1, 2, 1, 2, ....)$ and ${\mathbf a}^{(0)} = (1, -1, a)$, where $a$ is any integer. Then it is easy to check that ${\mathbf a}^{(n)} = \left ((-1)^n, (-1)^{n+1}, a\right )$, 
${\sigma}^{(n)} = \left ((-1)^n, (-1)^{n+1}, \sgn{(a)}\right )$ and the claim in Conjecture \ref{main} fails.
\end{rem}



\begin{thebibliography}{CAIV}

\bibitem{CAI} S.~Fomin, A.~Zelevinsky, \textit{Cluster algebras I: Foundations}, J. Amer. Math. Soc. {\bf 15} (2002), 497--529.  

\bibitem{CAIV} S.~Fomin, A.~Zelevinsky, \textit{Cluster algebras. IV. Coefficients}, Compos. Math. {\bf 143} (2007), no. 1, 112--164. 

\bibitem{DWZ} H.~Derksen, J.~Weyman, A.~Zelevinsky, \textit{Quivers with potentials and their representations II:
applications to cluster algebras}, J. Amer. Math. Soc. {\bf 23} (2010), 749--790.

\bibitem{Chebyshev} B.G.S.~Doman, \textit{The classical orthogonal polynomials}, World Scientific Publishing Co. Pte. Ltd., Hackensack, NJ, 2016.

\bibitem{GHKK} M.~Gross, P.~Hacking, S.~Keel, M.~Kontsevich, M., \textit{Canonical bases for cluster algebras},
J. Amer. Math. Soc. {\bf 31} (2018), 497--608.

\bibitem{LS1} K.~Lee, R. Schiffler, \textit{Positivity for cluster algebras of rank 3}, Publ. Rims {\bf 49} (2013), 601--649.

\bibitem{LS2} K.~Lee, R. Schiffler, \textit{Positivity for cluster algebras}, Annals of Mathematics {\bf 182} (2015), 73--125.

\bibitem{NC} A.~Na‡jera Cha‡vez, \textit{On the c-vectors and g-vectors of the Markov cluster algebra}, SŽm. Lothar. Combin. {\bf 69} (2012), Article B69d.

\bibitem{synchr} T.~Nakanishi, \textit{Synchronicity phenomenon in cluster patterns}, arXiv:1906.12036.


\bibitem{NZ} T.~Nakanishi, A.~Zelevinsky, \textit{On tropical dualities in cluster algebras},  in:  Algebraic groups and quantum groups, Contemp. Math.  {\bf 565} (2012), 217--226.

\bibitem{R} N.~Reading, \textit{A combinatorial approach to scattering diagrams}, arXiv:1806.05094.




\end{thebibliography}

\end{document}